\documentclass[11pt]{article}
\usepackage{color}
\usepackage{array}
\usepackage{algpseudocode}
\usepackage{algorithm}
\usepackage{amssymb}   
\usepackage{amsthm}    
\usepackage{amsmath}   
\usepackage{stmaryrd}  
\usepackage{titletoc}  
\usepackage{mathrsfs}  
\usepackage{graphicx}
\usepackage{hyperref}
\vfuzz2pt 
\hfuzz2pt 

\newlength{\defbaselineskip}
\setlength{\defbaselineskip}{\baselineskip}
\newcommand{\setlinespacing}[1]%
           {\setlength{\baselineskip}{#1 \defbaselineskip}}

\newtheoremstyle{myexample} 
  {3pt}    
  {3pt}    
  {}       
  {}       
  {\bfseries} 
  {.}      
  { }      
  {}       

\theoremstyle{myexample}
\newtheorem{example}{Example}
\theoremstyle{plain}
\newtheorem{thm}{Theorem}[section]

\newtheorem{lem}[thm]{Lemma}

\theoremstyle{definition}

\newtheorem{rmk}{Remark}[section]
\newcommand{\RN}[1]{%
  \textup{\uppercase\expandafter{\romannumeral#1}}%
}

\newcommand{\bC}{\mathbb{C}}
\newcommand{\bE}{\mathbb{E}}

\newcommand{\bP}{\mathbb{P}}
\newcommand{\bR}{\mathbb{R}}
\newcommand{\bN}{\mathbb{N}}

\newcommand{\sF}{\mathscr{F}}

\newcommand{\sech}{\text{sech}}
\usepackage{float}
\textwidth =17cm \topmargin =-18mm \textheight =24cm \oddsidemargin=5pt
\evensidemargin=0pt

\makeatletter\@addtoreset{equation}{section} \makeatother
 \allowdisplaybreaks
\begin{document}

\title{Feynman-Kac Formula for Time-Dependent Nonlinear Schr\"odinger  Equations with Applications in Numerical Approximations \footnotemark[1] 
}

\author{Hang Cheung\footnotemark[2] \and Jinniao Qiu\footnotemark[2] \and Yang Yang\footnotemark[2]}
\date{}
\footnotetext[1]{This work was partially supported by the National Science and Engineering Research Council of Canada (NSERC). }
\footnotetext[2]{Department of Mathematics \& Statistics, University of Calgary, 2500 University Drive NW, Calgary, AB T2N 1N4, Canada. \textit{E-mail}: \texttt{hang.cheung@ucalgary.ca} (H. Cheung), \texttt{jinniao.qiu@ucalgary.ca} (J. Qiu), \texttt{yang.yang1@ucalgary.ca} (Y. Yang).}

\maketitle

\begin{abstract}
\noindent 
In this paper, we present a novel Feynman-Kac formula and investigate learning-based methods for approximating general nonlinear time-dependent Schr\"odinger equations which may be high-dimensional. Our formulation integrates both the Fisk-Stratonovich and It\^o integrals within the framework of backward stochastic differential equations (BSDEs). Utilizing this Feynman-Kac representation, we propose learning-based approaches for numerical approximations. To demonstrate the accuracy and effectiveness of the proposed method, we conduct numerical experiments in both low- and high-dimensional settings, complemented by a convergence analysis. These results address the open problem concerning deep-BSDE methods for numerical approximations of high-dimensional time-dependent nonlinear Schr\"odinger equations (cf.  [Proc. Natl. Acad. Sci. 15 (2018), pp. 8505–8510] and [Frontiers Sci. Awards Math. (2024), pp. 1-14] by Han, Jentzen, and E).
\end{abstract}

{\bf Mathematics Subject Classification (2020): 81Q05, 60H30, 35Q41, 65C05, 65M12, 65M75}  

{\bf Keywords:} machine learning, deep learning, Schr\"odinger equation, Feynman-Kac formula, numerical approximation, backward stochastic differential equation

\section{Introduction}
The Schr\"odinger equation  is a fundamental equation in quantum mechanics that describes the temporal evolution of the quantum state of a physical system. It is central to the theory of quantum mechanics and essential for the development of quantum theory, with significant applications in fields such as chemistry, materials science, and quantum computing. In this work, we consider the following Cauchy problem for deterministic backward nonlinear time-dependent Schr\"odinger equation for  the wave function of a quantum mechanical system:
\begin{equation}\label{backward-NSch}
  \begin{array}{l}
    \begin{split}
      i \partial_t u(t,x) & = \frac{\nu}{2}\Delta u (t,x) +  f(t,x,u), \;\; t \leq T;\quad
       u(T,x)  = G(x),
    \end{split}
  \end{array}
\end{equation}
which is equivalent to the classical Schr\"odinger equation via the time-reversing transformation:
$
    u(t,x)\longmapsto
    u(T-t,x), \textup{ for} \; t \leq T,\,\, x\in\bR^d.
$
Here and throughout this work, $\nu>0$ is a real constant and $f(t,x,u)$ is a continuous function with polynomial growth in $u$, for instance, as in \cite{kato1987nonlinear}. The unknown function $u:[0,T]\times\bR^d\rightarrow \bC$ is complex-valued and is written as $u(t,x)=u^R(t,x)+iu^I(t,x)$, where both $u^R$ and $u^I$ are real-valued and defined on $[0,T]\times\bR^d$. Analogously, we write
$$
f(t,x,y)=f^R(t,x,y)+if^I(t,x,y),\quad G(x)=G^R(x)+ i G^I(x), \quad (t,x,y)\in [0,T]\times \bR^d\times \mathbb C,
$$
with $f^R,f^I, G^R,$ and $G^I$ being real-valued continuous functions. Then the nonlinear Schr\"odinger equation above may be written as a coupled system of partial differential equations:
\begin{equation}\label{backward-NSch-1}
  \left\{\begin{array}{l}
    \begin{split}
      -\partial_t u^R(t,x) & = -\frac{\nu}{2}\Delta u^I(t,x) -  f^I(t,x,u), \;\; t \leq T;\quad  u^R(T,x)  = G^R(x);\\
      -\partial_t u^I(t,x) & = \frac{\nu}{2}\Delta u^R(t,x) +  f^R(t,x,u), \;\; t \leq T;\quad \quad u^I(T,x)  = G^I(x).
           \end{split}
  \end{array}\right.
\end{equation}

\noindent Associated to \eqref{backward-NSch-1}, we propose the following backward stochastic differential equation (BSDE):
\begin{equation}\label{BSDE-Sch}
  \left\{\begin{array}{l}
  \begin{split}
  -dY^R_s(t,x)&
            =-\sqrt{\nu} (Z^R_s(t,x)+Z^I_s(t,x))*dW_s\\
            &\quad -f^I(s, \sqrt{\nu}(W_s-W_t)+x, Y_s(t,x))\,ds-\sqrt{\nu}Z^R_s(t,x)\,dW_s;\\
  -dY^I_s(t,x)&
            =\sqrt{\nu} (Z^R_s(t,x)-Z^I_s(t,x))*dW_s
            \\
            &\quad
            	+f^R(s, \sqrt{\nu}(W_s-W_t)+x,Y_s(t,x))\,ds-\sqrt{\nu}Z^I_s(t,x)\,dW_s;
            \\
    Y_T(t,x)&=G( \sqrt{\nu}(W_T-W_t)+x) ,\\
    \end{split}
  \end{array}\right.
\end{equation}
where $Y_s(t,x)=Y^R_s(t,x) + i Y^I_s(t,x)$, $Z_s(t,x)=Z^R_s(t,x) + i Z^I_s(t,x)$, and $*dW_s:=\circ dW_s- dW_s$ with $\circ dW_s$ and $dW_s$ being Fisk-Stratonovich integral and It\^o integral, respectively.
They are related to each other by the following:
\begin{align}\label{reltn-intro-Sch}
 Y_s(t,x)=u(s,\sqrt{\nu}(W_s-W_t)+x),\quad
 Z_s(t,x) = \nabla u(s,\sqrt{\nu}(W_s-W_t)+x),
\end{align} 
for all $(s,x)\in [t,T]\times \mathbb{R}^d$.
The system \eqref{BSDE-Sch} can be viewed as an uncoupled forward-backward stochastic differential equation (FBSDE) but is distinguished by the inclusion of nonstandard integrals $*dW_s$. The term ``uncoupled'' indicates that the backward process does not affect the forward dynamics that is nothing but the Wiener process $(W_t)_{t\geq 0}$, in contrast to ``coupled'' systems where such an interaction is present.  
\\[4pt]
It is now well-established that FBSDEs are intrinsically linked to a large class of parabolic partial differential equations (PDEs) with real-valued coefficients, which may be linear or nonlinear (see, for example, \cite{cheridito2007second,delarue2002existence,Delbaen-Qiu-Tang-NS-2012,HuPengFK95,MaProtterYong1994,ParPeng1992QPDE,PardouxTangFK99}). These links yield generalized Feynman-Kac formulas for the corresponding parabolic PDEs, enabling their resolutions to be reformulated as solving FBSDEs (see \cite{cheridito2007second,lejay2020forward} for instance). Recently, a class of deep learning-based (also called deep-BSDE) methods has been introduced to address the curse of dimensionality often encountered in numerical approximations of such parabolic PDEs; see \cite{beck2019machine,han2017deep,han2018solving,hure2020deep,zhang2004numerical} among others. However, this framework does not extend to time-dependent Schr\"odinger equations such as \eqref{backward-NSch},  due to the presence of the imaginary coefficient before the time derivative. As a result,  existing deep-BSDE methods are not directly applicable in this context.  This limitation along with the broader challenge in numerical approximations of high-dimensional time-dependent nonlinear Schrödinger equations, exists as an open problem in the seminal work \cite{han2018solving} and further discussed in \cite{han2025brief}.  \\
[4pt]
This paper addresses the gap in the availability of a Feynman-Kac representation for nonlinear time-dependent Schrödinger equations. Based on this representation, we propose and investigate learning-based approaches which effectively tackle the curse of dimensionality in numerical simulations. To the best of our knowledge, we are the first to give a Feynman-Kac formula for time-dependent nonlinear Schr\"odinger equations using BSDEs.  This is novel even for the linear case as both complex time transformation and Feynman path integrals (cf. (\cite{albeverio1976mathematical,feynman1948space,kac1949distributions,yan1994feynman})) are avoided in the representation.  Inspired by the deep-BSDE methods introduced in seminal works \cite{beck2019machine,han2017deep,han2018solving,hure2020deep},  we propose a novel learning-based approach for numerical approximations for the Schr\"odinger equation \eqref{backward-NSch}. Our numerical experiments on such BSDEs demonstrate strong agreement with the explicit solutions of the linear and nonlinear Schr\"odinger equations in both low- and high-dimensional settings, as demonstrated in Section \ref{sec_num}. Furthermore, a convergence analysis is provided to support these findings. In the numerical experiments and convergence analysis, we utilize neural network approximations for the unknown functions, enabling the algorithm and convergence analysis to be extended to high-dimensional time-dependent Schrödinger equations; in this context, deep neural networks are instrumental in addressing the curse of dimensionality (cf. \cite{han2018solving,isozaki2004many}).
\\[4pt]
The integrals with $*dW_t$ in BSDE \eqref{BSDE-Sch}, which combine Fisk-Stratonovich and It\^o integrals, capture the leading differential operators, thereby reflecting imaginary coefficient and wave-like behavior of Schr\"odinger equations. This feature poses significant challenges in both  theoretical study and numerical analysis. On the theoretical side, the Feynman-Kac formula is proved for both classical and weak solutions, while several open questions are raised regarding the pure probabilistic resolution of BSDE \eqref{BSDE-Sch} in Section 2.3. From a numerical perspective, this feature distinguishes the proposed learning-based approach (see Algorithm \ref{alg:cap} and Remark \ref{rmk-algo}) from those tailored to non-Schr\"odinger-type parabolic PDEs (cf. \cite{beck2019machine,han2017deep,han2018solving,hure2020deep}). For the challenges in numerical analysis, some non-standard calculations are introduced to derive estimates in the convergence analysis (see Remark \ref{rmk-numerical}). For other numerical methods for nonlinear Schr\"odinger equations, we refer to \cite{bao2024optimal,ji2024low,li2021fully,ostermann2022fully} and references therein.  
\\
[4pt]
This paper is organized as follows: In Section 2, we introduce our Feynman-Kac formula for both classical and weak solutions of general Schrödinger equations  and highlight some open questions arising from this Feynman-Kac representation. In Section 3, we propose a machine learning scheme for numerically computing the BSDEs \eqref{BSDE-Sch}, covering both linear and nonlinear cases of $f$ in both low- and high-dimensional problems. The experimental results demonstrate that the solutions of the BSDEs are in strong agreement with the explicit solutions of the Schrödinger equations. Finally, in Section 4 we present a convergence analysis regarding the proposed algorithm for simulations. Additionally, the appendix contains the estimate of certain error terms involved in the proof of Lemma \ref{Convergence_prelim_result}, along with the complete proof of Lemma \ref{minimization_neural_network}.


\section{Feynman-Kac formula for the nonlinear Schr\"odinger equation}

  A less rigorous form of Feynman-Kac formula is based on the complex time transformation, $\tau=i \, t$. By defining $U(\tau,x)=u(it,x)$, we arrive at the following \textit{parabolic} PDE:
 \begin{equation}\label{backward-NSch-parabolic}
  \begin{array}{l}
    \begin{split}
      - \partial_t U(\tau,x) & = \frac{\nu}{2}\Delta U (\tau,x) +  f(\tau,x,U);\quad
       U(T,x)  = G(x).
    \end{split}
  \end{array}
\end{equation}
This equation may be approached using the established theory of FBSDEs (see  \cite{delarue2002existence,HuPengFK95,MaProtterYong1994,ParPeng1992QPDE,PardouxTangFK99} for instance), which  suggests a representation via a  $\mathbb C$-valued BSDE:
\begin{equation}\label{BSDE-Sch-parabolic}
  \left\{\begin{array}{l}
  \begin{split}
  -d\widetilde Y_s(\tau,x)&
            = f(s, \sqrt{\nu}(W_s-W_{\tau})+x,\widetilde Y_s(\tau,x))\,ds-\sqrt{\nu}\widetilde Z_s(\tau,x)\,dW_s;
            \\
    \widetilde Y_T(\tau,x)&=G( \sqrt{\nu}(W_T-W_{\tau})+x) ,\\
    \end{split}
  \end{array}\right.
\end{equation}
with the relation
\begin{align}\label{reltn-intro-Sch-parabolic}
 \widetilde Y_s(\tau,x)=U(s,\sqrt{\nu}(W_s-W_{\tau})+x),\quad
\widetilde Z_s(t,x) = \nabla U(s,\sqrt{\nu}(W_s-W_{\tau})+x).
\end{align} 
Through equation \eqref{BSDE-Sch-parabolic} and relation \eqref{reltn-intro-Sch-parabolic}, we obtain a Feynman-Kac representation for Schr\"odinger equation \eqref{backward-NSch}. It is important to note that for all equations \eqref{backward-NSch}, \eqref{backward-NSch-parabolic},  \eqref{BSDE-Sch-parabolic}, and \eqref{reltn-intro-Sch-parabolic}, the time domain lies in the \textit{real} interval $[0,T]$. However, the complex time transformation $\tau=i \, t$ necessitates extending the time domain to an imaginary time interval, which imposes significant restrictions on both theory and applications. A formal representation for \textit{linear} Schr\"odinger equations was in fact given via so-called Feynman path integrals. However, these path integrals are not well-defined in most of their uses (cf. \cite{albeverio1976mathematical,feynman1948space,kac1949distributions,yan1994feynman}).
\\
[4pt]
In this section, we shall present a Feynman-Kac formula that avoids both the complex time transformation and Feynman path integrals. The representation is developed for both classical and weak solutions of  Schr\"odinger equation \eqref{BSDE-Sch}. Some further explorations of the resolution of BSDE \eqref{BSDE-Sch} using purely probabilistic methods are left as open questions.

\subsection{Feynman-Kac formula for classical solutions}
Denote by $C^{1,2}([0,T]\times \bR^d; \mathbb C)$ the space of complex-valued continuous functions $u(t,x)$ with bounded continuous derivatives $\partial_t u(t,x)$, $\partial_{x_j} u(t,x)$, and $\partial_{x_kx_l} u(t,x)$, $j,k,l=1,\dots,d$. In the following theorem, the Feynman-Kac formula establishes a connection between the classical solution of Schrödinger equation \eqref{backward-NSch} in $C^{1,2}([0,T]\times \bR^d; \mathbb C)$ and the BSDE \eqref{BSDE-Sch} via the relation \eqref{reltn-intro-Sch}.  

\begin{thm}\label{thm-main}
\begin{enumerate}
\item[(i)] Suppose that $u \in C^{1,2}([0,T]\times \bR^d; \mathbb C)$ is a solution to the Schr\"odinger equation \eqref{backward-NSch}. Then the pair $(Y,Z)$ defined through \eqref{reltn-intro-Sch} is a solution to BSDE \eqref{BSDE-Sch}.
\item[(ii)] If the pair $(Y,Z)$ with $Y_s(t,x)=Y^R_s(t,x) + i Y^I_s(t,x)$ and $Z_s(t,x)=Z^R_s(t,x) + i Z^I_s(t,x)$ for $0\leq t\leq s\leq T, \, x\in\bR^d$ satisfies BSDE \eqref{BSDE-Sch} and there exists a function $u\in C^{1,2}([0,T]\times \bR^d; \mathbb C)$ satisfying the relation \eqref{reltn-intro-Sch}, then the function $u$ is a solution to the Schr\"odinger equation \eqref{backward-NSch}.
\end{enumerate}
\end{thm}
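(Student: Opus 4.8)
The plan is to run Itô's formula along the Brownian characteristic $X_s:=\sqrt{\nu}(W_s-W_t)+x$, $s\in[t,T]$ (so that $dX_s=\sqrt{\nu}\,dW_s$ and $X_t=x$), and to recognize that the nonstandard differential $*dW_s=\circ dW_s-dW_s$ contributes exactly the missing second-order (Laplacian) term; once this is in hand, both directions reduce to matching drift and diffusion coefficients. For part (i), given a classical solution $u\in C^{1,2}$, define $(Y,Z)$ by \eqref{reltn-intro-Sch} and apply the real $d$-dimensional Itô formula to $u^R(s,X_s)$ and to $u^I(s,X_s)$; this produces the decomposition $dY^R_s=(\partial_t u^R+\tfrac{\nu}{2}\Delta u^R)(s,X_s)\,ds+\sqrt{\nu}\,Z^R_s\cdot dW_s$ and its imaginary counterpart. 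I would then establish the key identity
\[
Z^R_s*dW_s=\tfrac{\sqrt{\nu}}{2}\,\Delta u^R(s,X_s)\,ds,\qquad Z^I_s*dW_s=\tfrac{\sqrt{\nu}}{2}\,\Delta u^I(s,X_s)\,ds,
\]
obtained by expanding the $j$-th component $\partial_{x_j}u^R(s,X_s)$ by Itô, reading off that its covariation with $W^j$ has density $\sqrt{\nu}\,\partial_{x_jx_j}u^R(s,X_s)$, summing over $j$ and halving; consequently $\sqrt{\nu}(Z^R_s\pm Z^I_s)*dW_s=\tfrac{\nu}{2}(\Delta u^R\pm\Delta u^I)(s,X_s)\,ds$. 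Substituting these into \eqref{BSDE-Sch} and eliminating $\partial_t u^R,\partial_t u^I$ by means of the PDE system \eqref{backward-NSch-1} shows that the coefficients of $dY^R_s,dY^I_s$ agree term by term with those coming from Itô's formula, while $Y_T(t,x)=u(T,X_T)=G(X_T)$ gives the terminal condition; hence $(Y,Z)$ solves \eqref{BSDE-Sch}.

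For part (ii), given $(Y,Z)$ solving \eqref{BSDE-Sch} and $u\in C^{1,2}$ realizing \eqref{reltn-intro-Sch}, Itô's formula yields the same decomposition of $Y^R,Y^I$ as above while \eqref{BSDE-Sch} yields a second one; since $Z^R_s=\nabla u^R(s,X_s)$ and $Z^I_s=\nabla u^I(s,X_s)$, the correction identity above again applies, so the two diffusion parts coincide, and by uniqueness of the decomposition of a continuous semimartingale into local-martingale and finite-variation parts the two drift densities agree $ds\otimes d\bP$-a.e. on $[t,T]\times\Omega$. Comparing them and inserting $Y_s=u(s,X_s)$ gives $\partial_t u^R(s,X_s)=\tfrac{\nu}{2}\Delta u^I(s,X_s)+f^I(s,X_s,u(s,X_s))$ and $\partial_t u^I(s,X_s)=-\tfrac{\nu}{2}\Delta u^R(s,X_s)-f^R(s,X_s,u(s,X_s))$ for a.e.\ $s$, almost surely. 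Since all functions involved are continuous and $X_s\to x$ a.s.\ as $s\downarrow t$, letting $s\downarrow t$ (with $(t,x)$ arbitrary) upgrades these to the pointwise identities \eqref{backward-NSch-1} on $[0,T)\times\bR^d$, extended to $t=T$ by continuity, and the terminal condition $u(T,\cdot)=G$ follows by taking $t=T$ in $Y_T=G(X_T)$. Therefore $u$ solves \eqref{backward-NSch}.

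I expect the main obstacle to be the rigorous treatment of the mixed differential $*dW_s$: one must ensure that $Z_s=\nabla u(s,X_s)$ is a semimartingale so that its Fisk--Stratonovich integral is well defined (which requires slightly more smoothness than bare $C^{1,2}$, or else one adopts $Z_s*dW_s:=\tfrac{\sqrt{\nu}}{2}\Delta u(s,X_s)\,ds$ as the working definition, consistent with the computation above), and one must keep careful track of the componentwise covariations so that the correction collapses to $\tfrac{\sqrt{\nu}}{2}\Delta u\,ds$ rather than a divergence-type expression. Integrability is only a minor secondary point: since $\nabla u$ and the Hessian of $u$ are bounded, $u$ grows at most linearly and $f(s,\cdot,u)$ has at most polynomial growth, so all the relevant integrals along $X$ are finite and both the uniqueness of the semimartingale decomposition and the passage $s\downarrow t$ are justified, with a routine localization along exit times of $X$ from balls if one prefers.
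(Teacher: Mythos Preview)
Your proof of part (i) is essentially the same as the paper's: both apply It\^o's formula to $u^R(s,X_s)$ and $u^I(s,X_s)$, invoke the PDE system \eqref{backward-NSch-1} to rewrite the drift, and identify the residual Laplacian term with the $*dW_s$ correction via the Stratonovich--It\^o relation. Your remark about the regularity needed for the Fisk--Stratonovich integral of $Z_s=\nabla u(s,X_s)$ to be defined is well taken; the paper simply cites Revuz--Yor for the conversion \eqref{Stratonovich-ito-relation} and does not dwell on this point either.

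For part (ii) your argument is correct but takes a genuinely different route. After substituting \eqref{reltn-intro-Sch} into \eqref{BSDE-Sch} and converting the $*dW_s$ term back into $\tfrac{\nu}{2}\Delta(u^R\pm u^I)\,ds$, you compare the resulting semimartingale decomposition of $u^R(s,X_s)$ with the one coming from It\^o's formula, equate the finite-variation parts by uniqueness, and evaluate at $s=t$ (where $X_t=x$) to recover the PDE pointwise. The paper instead multiplies by a test function $\varphi\in C^\infty_c(\bR^d)$, applies It\^o's formula to the product $(u^R\varphi)(s,x+\sqrt\nu W_s)$, and integrates over $\Omega\times\bR^d$ with respect to $\bP\otimes dx$; translation invariance of Lebesgue measure, Fubini, and integration by parts then yield the PDE in weak form, after which arbitrariness of $\varphi$ gives the strong form. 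Your approach is more direct and entirely adequate for classical $C^{1,2}$ solutions. The paper's test-function route is more circuitous here but is precisely what is reused, almost verbatim, in the weak-solution setting of Theorem~\ref{thm-FK-weak-solution}, where one cannot apply It\^o's formula to $u$ directly; so its payoff is the uniform treatment of both regularity classes.
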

\begin{proof}
First, we prove (i). Without loss of generality, we take $t=0$. Applying It\^o's formula to $u^R$ and $u^I$ respectively yields that 
\begin{align*}
du^R(s,x+\sqrt{\nu}W_s)
&=\left(\partial_s u^R + \frac{\nu}{2} \Delta u^R\right)(s,x+\sqrt{\nu}W_s)\, ds
+\sqrt{\nu} \nabla u^R(s,x+\sqrt{\nu}W_s)\, dW_s,
\\
du^I(s,x+\sqrt{\nu}W_s)
&=\left(\partial_s u^I + \frac{\nu}{2} \Delta u^I\right)(s,x+\sqrt{\nu}W_s)\, ds
+\sqrt{\nu} \nabla u^I(s,x+\sqrt{\nu}W_s)\, dW_s.
\end{align*}
Here, we shall only verify the real part as the imaginary part follows in a similar way. In view of \eqref{backward-NSch} and \eqref{backward-NSch-1}, we have further
\begin{align}
du^R(s,x+\sqrt{\nu}W_s)
&= \left[ \frac{\nu}{2}\Delta( u^I +  u^R) (s,x+\sqrt{\nu}W_s) + f^I(s,x+\sqrt{\nu}W_s,u(s,x+\sqrt{\nu}W_s)) \right]\, ds\nonumber\\
&\quad 
+\sqrt{\nu} \nabla u^R(s,x+\sqrt{\nu}W_s)\, dW_s. \label{eq-thm-R}
\end{align}
Recalling the relationships between Fisk-Stratonovich integrals and It\^o integrals (see \cite[Chapter 4, Exercise 2.18]{revuz2013continuous}), we have
\begin{align}
 \frac{\nu}{2}\Delta( u^I +  u^R) (s,x+\sqrt{\nu}W_s) ds
 &=  \sqrt{\nu}\nabla( u^I +  u^R) (s,x+\sqrt{\nu}W_s)(\circ dW_s-dW_s)
 \nonumber\\
 & = \sqrt{\nu}\nabla( u^I +  u^R) (s,x+\sqrt{\nu}W_s) *dW_s, 
 \label{Stratonovich-ito-relation}
\end{align}
which together with relation \eqref{reltn-intro-Sch} substituted in \eqref{eq-thm-R} gives 
\begin{align*}
  dY^R_s(0,x)&
            =\sqrt{\nu} (Z^R_s(0,x)+Z^I_s(0,x))*dW_s
            +f^I(s, \sqrt{\nu}W_s+x, Y_s(0,x))\,ds+\sqrt{\nu}Z^R_s(0,x)\,dW_s.
\end{align*}
Analogously, we justify the second stochastic differential equation in \eqref{BSDE-Sch} for the imaginary part. The terminal condition is obvious and this justifies (i).\\
[4pt]
For (ii), suppose $(Y,Z)$ and $u$ satisfy BSDE \eqref{BSDE-Sch} as well as the relation \eqref{reltn-intro-Sch}. Let $\varphi$ be an arbitrary infinitely differentiable real-valued function with compact support on $\bR^d$. Then It\^o's formula implies
\begin{align*}
d\varphi(x+\sqrt{\nu}W_s)=\frac{\nu}{2} \Delta \varphi (x+\sqrt{\nu}W_s)\, ds + \sqrt{\nu} \nabla \varphi (x+\sqrt{\nu}W_s)\,dW_s.
\end{align*}
Notice that substituting the representation \eqref{reltn-intro-Sch} into the equation for $Y^R(0,x)$ in  \eqref{BSDE-Sch} gives 
\begin{align*}
du^R(s,x+\sqrt{\nu}W_s)
&= \sqrt{\nu} (\nabla u^R+\nabla u^I)(s,x+\sqrt{\nu}W_s) * dW_s+ f^I(s,x+\sqrt{\nu}W_s,u(s,x+\sqrt{\nu}W_s))  \, ds\nonumber\\
&\quad 
+\sqrt{\nu} \nabla u^R(s,x+\sqrt{\nu}W_s)\, dW_s
\\
&= \left[ \frac{\nu}{2}\Delta( u^I +  u^R) (s,x+\sqrt{\nu}W_s) + f^I(s,x+\sqrt{\nu}W_s,u(s,x+\sqrt{\nu}W_s)) \right]\, ds\nonumber\\
&\quad 
+\sqrt{\nu} \nabla u^R(s,x+\sqrt{\nu}W_s)\, dW_s,
\end{align*}
where we have used the relation \eqref{Stratonovich-ito-relation}. Applying It\^o's formula to the product $(u^R\varphi)(s,x+\sqrt{\nu}W_s) $ and taking integration w.r.t. $(\omega,x)$ on $\Omega\times\bR^d$ under $\bP\otimes dx$ (where $dx$ denotes the Lebesgue measure), we have for each $t\in[0,T]$,
\begin{align}
\int_{\bR^d} u^R(t,x)&\varphi(x)\,dx
=\bE\left[\int_{\bR^d}(u^R\varphi)(t,x+\sqrt{\nu}W_t)\,dx \right]
\notag\\
&=\int_{\bR^d}\bE\left[(u^R\varphi)(t,x+\sqrt{\nu}W_t)\right]\,dx 
\notag\\
&=\int_{\bR^d} \bE\left[ 
(u^R\varphi)(T,x+\sqrt{\nu}W_T)
-\int_t^T \left( \frac{\nu}{2}\Delta( u^I +  u^R)+f^I(\cdot,\cdot,u)\right)\varphi (s,x+\sqrt{\nu}W_s)\,ds\right.
\notag
\\&
\quad
-\int_t^T\nu \left((\nabla u^R)\cdot (\nabla \varphi) + \frac{1}{2}u^R \Delta \varphi  \right)  (s,x+\sqrt{\nu}W_s)\,ds
\notag\\
&\quad 
\left.
-\sqrt{\nu} \int_t^T \left(
\varphi \nabla u^R + u^R \nabla \varphi
\right)(s,x+\sqrt{\nu}W_s)\,dW_s
\right]\,dx
\notag\\
&
=\int_{\bR^d} \bE\left[ 
(u^R\varphi)(T,x+\sqrt{\nu}W_T)
-\int_t^T \left( \frac{\nu}{2}\Delta( u^I +  u^R)+f^I(\cdot,\cdot,u)\right)\varphi (s,x+\sqrt{\nu}W_s)\,ds\right.
\notag
\\&
\quad \left.
-\int_t^T\nu \left((\nabla u^R)\cdot (\nabla \varphi) + \frac{1}{2}u^R \Delta \varphi  \right)  (s,x+\sqrt{\nu}W_s)\,ds 
\right]\,dx
\notag\\
& 
=\bE\Bigg\{\int_{\bR^d} \left[ 
(u^R\varphi)(T,x+\sqrt{\nu}W_T)
-\int_t^T \left( \frac{\nu}{2}\Delta( u^I +  u^R)+f^I(\cdot,\cdot,u)\right)\varphi (s,x+\sqrt{\nu}W_s)\,ds\right.
\notag
\\&
\quad \left.
-\int_t^T\nu \left((\nabla u^R)\cdot (\nabla \varphi) + \frac{1}{2}u^R \Delta \varphi  \right)  (s,x+\sqrt{\nu}W_s)\,ds 
\right]\,dx\Bigg\}
\notag\\
&
=\int_{\bR^d} u^R(T,x)\varphi(x)\,dx
-\int_t^T\int_{\bR^d} \left(\frac{\nu}{2}\Delta u^I(s,x) +f^I(s,x,u(s,x))\right)\varphi(x)\,dx ds,
\label{eq-thm-ii} 
\end{align}
where we have used Fubini's theorem, the fact that 
$\int_{\bR^d} g(x+\sqrt{\nu}W_s) dx=\int_{\bR^d} g(x ) dx,\  \forall\, g\in L^1(\bR^d)$, the integration-by-parts formula
\begin{align*}
\int_{\bR^d} \left[(\nabla u^R)(s,x)\cdot (\nabla \varphi) (x)+ \frac{1}{2}u^R(s,x) \Delta \varphi (x)\right]\,dx
&=\int_{\bR^d} \left[-\Delta u^R(s,x)    \varphi (x)+ \frac{1}{2}\Delta u^R(s,x)  \varphi (x)\right]\,dx
\\
&=-\frac{1}{2}\int_{\bR^d} \Delta u^R(s,x)  \varphi (x) \,dx,
\end{align*}
and the zero-expectation of the stochastic integrals since $\varphi,\, u^R,\, \nabla \varphi$, and $\nabla u^R$ are bounded continuous functions. Thus, by the arbitrariness of the test function $\varphi$, it follows that the function $u^R$ satisfies the partial differential equation
$$-\partial_t u^R(t,x)  = -\frac{\nu}{2}\Delta u^I(t,x) -  f^I(t,x,u);\quad u^R(T,x)=G^R(x).$$ 
In a similar way, we may justify the imaginary part $u^I$ satisfying \eqref{backward-NSch-1} and thus \eqref{backward-NSch}.  
\end{proof}

\subsection{Feynman-Kac formula for weak solutions  }
The associated solution to \eqref{backward-NSch} in Theorem \ref{thm-main} is confined to the space $C^{1,2}([0,T]\times \bR^d; \mathbb C)$ which may be replaced by the set of functions valued in Sobolev spaces, for instance, as in \cite{kato1987nonlinear}. \\ 
[4pt]
For each \((k,q) \in \mathbb{N}_0 \times [1,\infty)\), we define the \(k\)-th Sobolev space \((H^{k,q}, \|\cdot\|_{k,q})\) on \(\mathbb{R}^d\) and its dual space \((H^{-k,q'}, \|\cdot\|_{-k,q'})\), where \(q' = \frac{q}{q-1}\). We denote by \(C^{\infty}_c(\mathbb{R}^d)\) (respectively, \(C^{\infty}_c(\mathcal{O})\) for each open set \(\mathcal{O} \subset \mathbb{R}^d\)) the set of all infinitely differentiable functions with compact support on \(\mathbb{R}^d\) (respectively, \(\mathcal{O}\)). As usual, when $k=0$ we also write $(L^q,\|\cdot\|_q)$ for $(H^{0,q},\|\cdot\|_{0,q})$, and by $\langle \cdot,\,\cdot \rangle$, we denote the duality between the space $L^q$ and its dual $L^{q'}$ for $q\in(1,\infty)$. For simplicity,
if a complex, vector or matrix-valued function $v=(v^{jl})_{1\leq j\leq N,1\leq l\leq n}$ has $v^{jl}\in H^{k,q}$ for $j=1,\dots, N$, $l=1,\dots,n$ with some $n,N\in \bN^+$, we shall write $v\in (H^{k,q})^{N\times n}$ or just simply, $v\in H^{k,q} $ if there is no confusion on dimensions,    with 
$\|v\|_{k,q}=\left( \sum_{l=1}^n\sum_{j=1}^N \big\|v^{jl}\big\|_{k,q}^q \right)^{1/q}$.
Let 
\begin{align}
u\in C^1([0,T]; H^{1,2}) \cap C([0,T]; H^{-1,2})   \text{ and }  f(\cdot,\cdot,u)\in L^p(0,T; L^q )  \text{ for some } p,q\geq1, 
\label{function-space-solution}
\end{align}
 and suppose that such a function $u:\,[0,T]\times \bR^d \rightarrow \mathbb C $ is a weak solution to  \eqref{backward-NSch}, i.e., for any $\varphi\in C^{\infty}_{c}(\bR^d)$, it holds that for each $t\in[0,T]$,
\begin{equation}\label{weak-solution}
\left\{
\begin{split}
\langle u^R(t),\,\varphi\rangle=\langle G^R,\, \varphi\rangle 
+\int_t^T\langle \frac{\nu}{2} \nabla u^I(s),\,\nabla \varphi\rangle ds -\int_t^T\langle f^I(s,u),\,\varphi\rangle\,ds ,
\\
\langle u^I(t),\,\varphi\rangle=\langle G^I,\, \varphi\rangle 
-\int_t^T\langle \frac{\nu}{2} \nabla u^R(s),\,\nabla \varphi\rangle ds + \int_t^T\langle f^R(s,u),\,\varphi \rangle\,ds .
\end{split}
\right.
\end{equation}
Due to  the lack of continuous differentiability, one cannot apply It\^o's formula to the composition $u(t,x+ \sqrt{\nu}W_t)$. Instead, one may apply the It\^o-Kunita-Wentzell-Krylov formula  \cite[Theorem 1]{Krylov_09} to distribution-valued functions and it yields that
\begin{align}
du^R(s,x+\sqrt{\nu}W_s)
&= \left(\frac{\nu}{2}\Delta u^I(s,x+\sqrt{\nu}W_s) +  f^I(s,x+\sqrt{\nu}W_s,u(s,x+\sqrt{\nu}W_s))\right)\,ds
\notag\\
&\quad
+ \frac{\nu}{2} \Delta u^R (s,x+\sqrt{\nu}W_s)\, ds
+\sqrt{\nu} \nabla u^R(s,x+\sqrt{\nu}W_s)\, dW_s
\notag\\
&= \left(  f^I(\cdot,\cdot,u)+ \frac{\nu}{2}(\Delta u^I+\Delta u^R) \right)(s,x+\sqrt{\nu}W_s)\,ds
+\sqrt{\nu} \nabla u^R(s,x+\sqrt{\nu}W_s)\, dW_s,
\notag
\end{align}
 which holds in the distributional sense as in \eqref{weak-solution}.
 \begin{rmk}\label{rmk-divergence}
 For each $g\in L^2(0,T;(L^2)^d)$, the compositions like $g(s,x+\sqrt{\nu}W_s)$ make senses $d\bar{\mathbb{P}}\otimes dt \otimes dx$-a.e. by \cite[Theorem 14.3]{BarlesLesigne_BSDEPDE97_inbook} (see also
\cite[Lemma 3.1]{Delbaen-Qiu-Tang-NS-2012}) and the divergence $\nabla \cdot g(s,x+\sqrt{\nu}W_s)$ may be understood as $H^{-1,2}$-valued process. In view of the  probabilistic interpretation for the divergence (see \cite[Lemma 3.1]{Stoica-2003}), we have the equivalent representation for the integral $*dW_t$, i.e., for each $0\leq t\leq s\leq T$,
\begin{align*}
  \int_t^s\!\!\!g(\tau,x+\sqrt{\nu} W_{\tau}) * dW_{\tau}
  &=\frac{1}{2}\sum_{j=1}^d\left( \int_t^s\!\!\! g^j(\tau,x+\sqrt{\nu}W_{\tau})\,dW_{\tau}^j
  +\!\int_t^s \!\!\!g^j(\tau,x+\sqrt{\nu}W_{\tau})\,\overleftarrow{dW}^j_{\tau}  \right)
  \\
  &=\frac{\sqrt{\nu}}{2}\int_t^s \nabla\cdot g(\tau,x+\sqrt{\nu} W_{\tau})\,d\tau,
  \end{align*}
   with the  integral $\overleftarrow{dW}^j_{\tau}$ being the \emph{backward} stochastic integral (see \cite{NualaPardou98}) and  $dW_{\tau}^j$ the standard It\^o integral.
 \end{rmk}
 \noindent Recalling $u\in C^1([0,T]; H^{1,2})$, we have by Remark \ref{rmk-divergence} that
 \begin{align*}
 du^R(s,x+\sqrt{\nu}W_s) &=   f^I(s,x+\sqrt{\nu}W_s,u(s,x+\sqrt{\nu}W_s))\,ds
 + \sqrt{\nu}(\nabla u^I+\nabla u^R) (s,x+\sqrt{\nu}W_s)* dW_s
 \\
&\quad
+\sqrt{\nu} \nabla u^R(s,x+\sqrt{\nu}W_s)\, dW_s.
 \end{align*}
  By relation \eqref{reltn-intro-Sch}, straightforward substitution gives 
\begin{align*}
  dY^R_s(0,x)&
            =\sqrt{\nu} (Z^R_s(0,x)+Z^I_s(0,x))*dW_s
            +f^I(s, \sqrt{\nu}W_s+x, Y_s(0,x))\,ds+\sqrt{\nu}Z^R_s(0,x)\,dW_s.
\end{align*}
Analogously, we justify the second stochastic differential equation in \eqref{BSDE-Sch} for the imaginary part. \\
[4pt]
On the other hand, the converse holds in a similar way to (ii) of Theorem \ref{thm-main}.\\[4pt]
To sum up, we reach the following result:
\begin{thm}\label{thm-FK-weak-solution}
\begin{enumerate}
\item[(i)] Suppose that $u$ satisfying \eqref{function-space-solution} is a weak solution to the Schr\"odinger equation \eqref{backward-NSch}. Then the pair $(Y_s(t,x),Z_s(t,x))$ defined through \eqref{reltn-intro-Sch} is a solution to BSDE \eqref{BSDE-Sch} for a.e. $x\in\bR^d$.
\item[(ii)] If the pair $(Y,Z)$ with $Y_s(t,x)=Y^R_s(t,x) + i Y^I_s(t,x)$ and $Z_s(t,x)=Z^R_s(t,x) + i Z^I_s(t,x)$ for $0\leq t\leq s\leq T, \, x\in\bR^d$ satisfies BSDE \eqref{BSDE-Sch} for a.e. $x\in\bR^d$ and there exists a function $u $ satisfying \eqref{function-space-solution} and the relation \eqref{reltn-intro-Sch}, then the function $u$ is a solution to the Schr\"odinger equation \eqref{backward-NSch}.
\end{enumerate}
\end{thm}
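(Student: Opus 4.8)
The plan is to make rigorous the formal computation carried out in the text preceding the statement, splitting into the two implications. Throughout, by translation in time one may take $t=0$, and all identities involving the shifted compositions $g(s,x+\sqrt\nu W_s)$ are to be read $d\bar{\bP}\otimes dt\otimes dx$-a.e., which is already why the conclusion of part~(i) is only asserted for a.e.\ $x$.

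For part~(i), I would regard $u^R(s,\cdot)$ and $u^I(s,\cdot)$ as $H^{1,2}$-valued processes whose time derivatives, by the weak formulation \eqref{weak-solution}, take values in $H^{-1,2}+L^q$, and apply the It\^o-Kunita-Wentzell-Krylov formula \cite[Theorem 1]{Krylov_09} to the compositions $u^R(s,x+\sqrt\nu W_s)$ and $u^I(s,x+\sqrt\nu W_s)$. This produces a semimartingale decomposition whose drift is $\big(\partial_s u^R+\frac{\nu}{2}\Delta u^R\big)(s,x+\sqrt\nu W_s)$ and whose martingale part is $\sqrt\nu\,\nabla u^R(s,x+\sqrt\nu W_s)\,dW_s$, understood distributionally as in \eqref{weak-solution}. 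Substituting $\partial_s u^R=\frac{\nu}{2}\Delta u^I+f^I$ collapses the drift to $\big(f^I+\frac{\nu}{2}\Delta(u^I+u^R)\big)(s,x+\sqrt\nu W_s)$, and Remark~\ref{rmk-divergence}---i.e.\ the probabilistic interpretation of the divergence via the backward integral of \cite{NualaPardou98} together with \cite[Lemma 3.1]{Stoica-2003}, the compositions being licit $d\bar{\bP}\otimes dt\otimes dx$-a.e.\ by \cite[Theorem 14.3]{BarlesLesigne_BSDEPDE97_inbook}---rewrites $\frac{\nu}{2}\Delta(u^I+u^R)(s,x+\sqrt\nu W_s)\,ds$ as $\sqrt\nu\,\nabla(u^I+u^R)(s,x+\sqrt\nu W_s)*dW_s$. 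Inserting the relation \eqref{reltn-intro-Sch} then reproduces the $Y^R$-equation of \eqref{BSDE-Sch} for a.e.\ $x$; the $Y^I$-equation is obtained in the same manner, and the terminal condition is immediate from \eqref{reltn-intro-Sch}.

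For part~(ii), I would follow the template of Theorem~\ref{thm-main}(ii). Starting from \eqref{BSDE-Sch} and \eqref{reltn-intro-Sch} and using Remark~\ref{rmk-divergence} to trade the $*dW_s$ term for the $\frac{\nu}{2}\Delta(u^I+u^R)$ drift, one recovers the semimartingale decomposition of $u^R(s,x+\sqrt\nu W_s)$ displayed just before the statement. For $\varphi\in C^\infty_c(\bR^d)$ the process $\varphi(x+\sqrt\nu W_s)$ is a genuine It\^o process, so the semimartingale product rule applies to $(u^R\varphi)(s,x+\sqrt\nu W_s)$; integrating over $\Omega\times\bR^d$ against $\bP\otimes dx$, then using Fubini, the translation invariance $\int_{\bR^d}g(x+\sqrt\nu W_s)\,dx=\int_{\bR^d}g(x)\,dx$, the vanishing of the expectations of the stochastic integrals (whose integrands are controlled by the $H^{1,2}$-regularity of $u$ and the smoothness of $\varphi$), and one integration by parts to cancel the $\frac{\nu}{2}u^R\Delta\varphi$, $\nu\,\nabla u^R\cdot\nabla\varphi$ and $\frac{\nu}{2}\Delta u^R\,\varphi$ contributions against one another, one is left exactly with the first identity of \eqref{weak-solution}; the imaginary part is symmetric, so $u$ is a weak solution of \eqref{backward-NSch}.

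The main obstacle is the rigorous justification of the distributional It\^o step: verifying that the regularity in \eqref{function-space-solution} fits the hypotheses of \cite[Theorem 1]{Krylov_09}, that the shifted compositions and the $H^{-1,2}$-valued divergence are well defined $d\bar{\bP}\otimes dt\otimes dx$-a.e., and that the divergence identity of Remark~\ref{rmk-divergence} is applicable at this level of generality; this is also precisely what forces the ``for a.e.\ $x$'' qualifier. By contrast the manipulations in part~(ii)---Fubini, translation invariance, integration by parts against the smooth compactly supported $\varphi$, and the vanishing of martingale expectations---are routine once the semimartingale decomposition from Remark~\ref{rmk-divergence} is in hand.
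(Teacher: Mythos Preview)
Your proposal is correct and follows essentially the same approach as the paper: for (i) you apply the It\^o--Kunita--Wentzell--Krylov formula of \cite{Krylov_09} to the distribution-valued composition, substitute the weak PDE to reshape the drift, and invoke Remark~\ref{rmk-divergence} to convert the Laplacian term into the $*dW_s$ integral; for (ii) you reverse Remark~\ref{rmk-divergence} and repeat the product-rule/Fubini/translation-invariance argument of Theorem~\ref{thm-main}(ii). This is exactly what the paper does, and your identification of the technical crux---checking that \eqref{function-space-solution} licenses the distributional It\^o step and the a.e.-in-$x$ caveat---matches the paper's emphasis.
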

 \begin{rmk}\label{rmk-thm-weak-solution}
 In the above theorem, a Feynman-Kac formula is established between weak solutions of the Schr\"odinger equation \eqref{backward-NSch} and the associated BSDE \eqref{BSDE-Sch}. The integrability of $f(\cdot,\cdot,u)$ in \eqref{function-space-solution} helps to ensure that the corresponding integrals in \eqref{weak-solution} and BSDE \eqref{BSDE-Sch} are well-defined, and at the same time, it allows $f$ to be nonlinear and of polynomial growth in $u$ (see \cite{kato1987nonlinear}).
 \end{rmk}
\subsection{Open questions}
\label{open_questions}
In Theorems \ref{thm-main} and \ref{thm-FK-weak-solution}, we presuppose the existence of the function $u(t,x)$, either as a classical or weak solution of 
the Schr\"odinger equation \eqref{backward-NSch}, or as a function representing the solution pair $(Y,Z)$ of BSDE \eqref{BSDE-Sch}. In fact, the Schrödinger equation has been extensively studied, as seen in references such as \cite{berezin2012schrodinger,colliander2008global,kato1987nonlinear}, which discuss the existence and uniqueness of solutions. Using the Feynman-Kac formula in Theorems \ref{thm-main} and \ref{thm-FK-weak-solution}, the existence and uniqueness of the solution to the Schrödinger equation \eqref{backward-NSch} implies the unique existence of the solution to BSDE \eqref{BSDE-Sch}. The open question is the converse: without relying on existing well-posedness results of the associated Schrödinger equations, how can we establish the existence and uniqueness of the solution to BSDE \eqref{BSDE-Sch}, from which one may derive the unique existence of solutions to the associated Schrödinger equations?\\
[4pt]
In what follows, we explore BSDE \eqref{BSDE-Sch} within the framework of two established solution theories for BSDEs to better understand the associated challenges.\\
[4pt]
On one hand, in comparison to the standard BSDE theory developed by Pardoux and Peng \cite{ParPeng_90}, the integral terms 
\begin{align}
 \int_{\tau}^T  g(s,Z_s(t,x))*d(\sqrt{\nu}W_s) ,\quad t\leq \tau \leq T,  \label{special-integral}
 \end{align}
need special consideration. Indeed, such integrals are utilized for the interpretation of divergence terms of parabolic equations; see \cite{BarlesLesigne_BSDEPDE97_inbook,Stoica-2003}. The well-posedness of the associated BSDEs involving terms \eqref{special-integral} hinges on the function $g$ being Lipschitz-continuous with respect to $ Z$. Specifically, there exists $\alpha\in (0,1)$ such that
$
|g(s,z_1)-g(s,z_2)| \leq \alpha |z_1-z_2|, \ \forall\, z_1,z_2\in\bR^d,\,s\in[0,T].
$
This condition mandates that the Lipschitz constant $\alpha$ is strictly less than $1$, which is essential for deriving certain estimates of $Z$ in the solution theory; refer to \cite{Stoica-2003} for more details. However, in the context of BSDE \eqref{BSDE-Sch}, we have
$g(s, Z_s(t,x)) = Z^R_s(t,x)\pm  Z^I_s(t,x),\  t\leq \tau \leq s\leq T,$
where the function $g$ remains Lipschitz-continuous in $ Z$, but with a Lipschitz constant $\alpha \geq 1$. This poses a significant challenge in estimating $Z$.\\
[4pt]
On the other hand,  BSDE \eqref{BSDE-Sch} can be interpreted as a $\mathbb C$-valued second order BSDE. In fact, inspired by the work of \cite{cheridito2007second,soner2012wellposedness}, we express $Z$ as an It\^o process satisfying:
$
dZ_s(t,x)= \sqrt{\nu} \Gamma_s(t,x) \,dW_s + A_s(t,x)\,ds, \  t\leq s \leq q.
$
By Remark \ref{rmk-divergence}, we have: for $t\leq \tau\leq T$,
\begin{align}
\int_{\tau}^T\sqrt{\nu} (Z^R_s(t,x)\pm Z^I_s(t,x))*dW_s
&= \int_{\tau}^T \frac{\sqrt{\nu}}{2}\nabla\cdot (\sqrt{\nu}Z^R_s(t,x)\pm \sqrt{\nu}Z^I_s(t,x))\,ds
\notag \\
&
=\int_{\tau}^T \frac{\nu}{2} \text{tr}(\Gamma^R_s(t,x)\pm \Gamma^I_s(t,x))\,ds,
\end{align}
which, substituted into the \eqref{BSDE-Sch}, results in a second order BSDE. In this scenario, $Y_s(t,x)$ is complex-valued, and, to the best of our knowledge, such a BSDE has not been previously studied or addressed in any existing theoretical framework.

\section{Algorithm and Numerical Experiment}
\subsection{An algorithm}
Our BSDE \eqref{BSDE-Sch} can be rewritten as the following  FBSDE: 
\begin{equation}\label{FBSDE-Sch}
  \left\{\begin{array}{l}
  \begin{split}
  d\mathcal{X}_s(t,x) &= \sqrt{\nu}dW_s,\quad
  \mathcal{X}_t(t,x) = x;\\ 
  -dY^R_s(t,x)&
            =-\sqrt{\nu} (Z^R_s(t,x)+Z^I_s(t,x))*dW_s\\
            &\quad -f^I\left(s, \mathcal{X}_s(t,x), Y_s(t,x)\right)\,ds-\sqrt{\nu}Z^R_s(t,x)\,dW_s;\\
  -dY^I_s(t,x)&
            =\sqrt{\nu} (Z^R_s(t,x)-Z^I_s(t,x))*dW_s
            \\
            &\quad
            	+f^R\left(s, \mathcal{X}_s(t,x),Y_s(t,x)\right)\,ds-\sqrt{\nu}Z^I_s(t,x)\,dW_s;
            \\
    Y_T(t,x)&=G(\mathcal{X}_T(t,x)),\\
    \end{split}
  \end{array}\right.
\end{equation}
for all $(s,x)\in [t,T]\times \mathbb{R}^d$. It is related to the solution $u$ of equation \eqref{backward-NSch} by
\begin{align}\label{numeric_relation}
 Y_s(t,x)=u(s,\mathcal{X}_s(t,x)),\quad
 Z_s(t,x) = \nabla u(s,\mathcal{X}_s(t,x)),
\end{align} 
thus the solution $u(s,x)$ would exactly be $Y_s(s,x)$. In our BSDE system \eqref{BSDE-Sch}, the presence of both the Fisk-Stratonovich integral and the Itô integral complicates the numerical computation using any conventional methods (see, for instance, \cite{zhang2004numerical}). Additionally, the deep learning method proposed in \cite{han2018solving} is not capable of handling this atypical form of BSDEs. However, the following will numerically demonstrate the effectiveness of our proposed BSDE approach in solving the Schrödinger equations \eqref{backward-NSch}, using a scheme inspired by \cite{hure2020deep}. \\
[4pt]
For function approximations, in the subsequent presentation of the algorithm, the execution of numerical experiments, and the proof of convergence analysis, we employ neural network approximations for the unknown functions. This approach allows for the extension of both the algorithm and convergence analysis to high-dimensional time-dependent Schr\"odinger equations (cf. \cite{isozaki2004many}); in this context, deep neural networks play a crucial role in tackling the curse of dimensionality (cf. \cite{han2018solving}).\\[4pt]
Let $\mathcal{N}_{d, d_1, L, m}$ denote the set of functions represented by feedforward neural networks with $L+1$ layers (where integer $L>1$), $d$-dimensional inputs, $d_1$-dimensional outputs, and $m$ neurons in each hidden layer. Typically, we represent the parameters for each neural network function by $\theta \in \mathbb{R}^{N_m}$ where $N_m=d(1+m)+m(1+m)(L-2)+m\left(1+d_1\right)$  and $\varrho$ denotes the activation function.\\
[4pt]
For any $M\in\bN^+$, we introduce the time grid $\pi:=\big\{t_0,t_1,\ldots,t_M|0=:t_0<t_1<\ldots<t_M:=T\big\}$, with modulus $|\pi|=\max _{j=0, \ldots, M-1} \Delta t_j, \Delta t_j:=t_{j+1}-t_j$. Without loss of generality, we assume $(t,x)$ to be $(0,x_0)$ for some fixed $x_0 \in \mathbb{R}^d$. We employ the Euler scheme to discretize our forward process in \eqref{FBSDE-Sch} as  
$
X_{t_{j+1}}=X_{t_j}+\sqrt{\nu}\Delta W_{t_j}, \quad j=0, \ldots, M-1, \,X_0=x_0 $,
where we set $\Delta W_{t_j}:=W_{t_{j+1}}-W_{t_j}$. To alleviate notations, we omit the dependence of $X=X^\pi(0,x_0)$ on the time grid $\pi$, initial time $0$ and initial data $x_0$ as there is no ambiguity. Define
\begin{align*}
    &F^R(t,x,u^R,u^I,z^R_{t+1},z^I_{t+1},z^R_{t},z^I_{t},\Delta_t,\Delta_w)\\
    &:= u^R+ \frac{\sqrt{\nu}}{2}(z^R_{t+1} + z^I_{t+1})\Delta_w + \frac{\sqrt{\nu}}{2}(z^R_t - z^I_t)\Delta_w + f^I(t,x,u^R,u^I)\Delta_t,\\
    &F^I(t,x,u^R,u^I,z^R_{t+1},z^I_{t+1},z^R_t,z^I_t,\Delta_t,\Delta_w)\\
    &:=u^I - \frac{\sqrt{\nu}}{2}(z^R_{t+1} - z^I_{t+1})\Delta_w + \frac{\sqrt{\nu}}{2}(z^R_t + z^I_t)\Delta_w - f^R(t,x,u^R,u^I)\Delta_t.
\end{align*}
Here and in what follows, we do not distinguish the writings $f(t,x,u)$ and $f(t,x,u^R,u^I)$. From the equations \eqref{FBSDE-Sch} and \eqref{numeric_relation}, for $j = 1,\ldots, M$, we have that
\begin{align*}
    &u^R(t_{j+1},X_{t_{j+1}})\\
    \approx& F^R\Big(t_j,X_{t_j},u^R(t_j,X_{t_j}),u^I(t_j,X_{t_j}),z^R(t_{j+1},X_{t_{j+1}}),z^I(t_{j+1},X_{t_{j+1}}),z^R(t_j,X_{t_j}),z^I(t_j,X_{t_j}),\Delta t_j, \Delta W_{t_j}\Big);\\
    &u^I(t_{j+1},X_{t_{j+1}})\\
    \approx& F^I\Big(t_j,X_{t_j},u^R(t_j,X_{t_j}),u^I(t_j,X_{t_j}),z^R(t_{j+1},X_{t_{j+1}}),z^I(t_{j+1},X_{t_{j+1}}),z^R(t_j,X_{t_j}),z^I(t_j,X_{t_j}),\Delta t_j, \Delta W_{t_j}\Big).
\end{align*}
Denote $\theta=(\xi,\eta)$. For each $j = 0,\ldots,M-1$, we set up neural networks $\mathcal{U}^R_{t_j}$, $\mathcal{U}^I_{t_j}$, $\mathcal{Z}^R_{t_j}$, $\mathcal{Z}^I_{t_j}\in\mathcal{N}_{d, d_1, L, m}$ to approximate $u^R(t_j,\cdot)$, $u^I(t_j,\cdot)$, $z^R(t_j,\cdot)$, $z^I(t_j,\cdot)$ respectively. The algorithm is stated below.
\begin{algorithm}[H]
\caption{Deep Learning-based scheme for the Schr\"odinger equation \eqref{backward-NSch}}\label{alg:cap}
\begin{algorithmic}
\State \textbf{Initialize: } $\widehat{\mathcal{U}}^R_{t_M} := G^R(\cdot)$, $\widehat{\mathcal{U}}^I_{t_M} := G^I(\cdot)$, $\widehat{\mathcal{Z}}^R_{t_M} := \nabla G^R(\cdot)$, $\widehat{\mathcal{Z}}^I_{t_M} := \nabla G^I(\cdot)$
\For{$j = M-1$ to $0$}
\State Given $\widehat{\mathcal{U}}^R_{t_{j+1}}$, $\widehat{\mathcal{U}}^I_{t_{j+1}}$, $\widehat{\mathcal{Z}}^R_{t_{j+1}}$, $\widehat{\mathcal{Z}}^I_{t_{j+1}}$, minimize 
\begin{align*}
\left\{\begin{aligned}
L_j(\theta)  :&=\mathbb{E}\Big|\widehat{\mathcal{U}}^R_{t_{j+1}}\left(X_{t_{j+1}}\right)-F^R\big(t_j, X_{t_j}, \mathcal{U}^R_{t_j}(X_{t_j} ; \xi), \mathcal{U}^I_{t_j}(X_{t_j} ; \xi),\widehat{\mathcal{Z}}_{t_{j+1}}^R(X_{t_{j+1}}),\widehat{\mathcal{Z}}_{t_{j+1}}^I(X_{t_{j+1}}),\\
&\hspace{20pt}\mathcal{Z}_{t_j}^R(X_{t_j} ; \eta),\mathcal{Z}_{t_j}^I(X_{t_j} ; \eta),\Delta t_j, \Delta W_{t_j}\big)\Big|^2 \\
&\quad
+\mathbb{E}\Big|\widehat{\mathcal{U}}^I_{t_{j+1}}\left(X_{t_{j+1}}\right)-F^I\big(t_j, X_{t_j}, \mathcal{U}^R_{t_j}(X_{t_j} ; \xi), \mathcal{U}^I_{t_j}(X_{t_j} ; \xi),\widehat{\mathcal{Z}}_{t_{j+1}}^R(X_{t_{j+1}}),\widehat{\mathcal{Z}}_{t_{j+1}}^I(X_{t_{j+1}}),\\
&\hspace{220pt}\mathcal{Z}_{t_j}^R(X_{t_j} ; \eta),\mathcal{Z}_{t_j}^I(X_{t_j} ; \eta),\Delta t_j, \Delta W_{t_j}\big)\Big|^2; \\
\theta_j^* &=(\xi^*_j,\eta^*_j) \in \arg \min _{\theta \in \mathbb{R}^{N_m}} L_j(\theta) .
\end{aligned}\right.
\end{align*}
\State Update $\widehat{\mathcal{U}}_{t_j}^R=\mathcal{U}_{t_j}^R\left(. ; \xi_j^*\right)$, $\widehat{\mathcal{U}}_{t_j}^I=\mathcal{U}_{t_j}^I\left(. ; \xi_j^*\right)$, $\widehat{\mathcal{Z}}_{t_j}^R=\mathcal{Z}_{t_j}^R\left(. ; \eta_j^*\right)$, and $\widehat{\mathcal{Z}}_{t_j}^I=\mathcal{Z}_{t_j}^I\left(. ; \eta_j^*\right)$.
\EndFor
\end{algorithmic}
\end{algorithm}
\begin{rmk}\label{rmk-algo}
 The above algorithm is inspired by, yet distinct from, the work of Huré, Pham, and Warin in \cite{hure2020deep}. Indeed, due to the presence of integrals with $*dW_t$ in BSDE \eqref{BSDE-Sch}, at each time step $[t_j,t_{j+1}]$, the iteration functions $F^R$ and $F^I$ depend on the values of
  $Z_{t_{j+1}}$ and $\widehat{\mathcal{Z}}_{t_{j+1}}$ at time $t_{j+1}$, a dependence that is absent in non-Schr\"odinger-type parabolic cases (see, for instance, \cite{beck2019machine,han2017deep,han2018solving,hure2020deep}).
\end{rmk}
\subsection{Numerical experiments}\label{sec_num}
\noindent We conduct numerical experiments in both low-dimensional (1D) and high-dimensional (48D) settings, addressing both linear and nonlinear scenarios. Due to the oscillatory behavior of wave equation solutions, standard activation functions such as Tanh, Sigmoid, and Softplus do not perform well in capturing these intricate patterns. Nonetheless, these activations remain important in less oscillatory regions, where the solution is less curved and rapid variations are absent. To this end, for each \( j = 0, \ldots, M - 1 \), we implement a common neural architecture for the networks \( \mathcal{U}^R_{t_j} \), \( \mathcal{U}^I_{t_j} \), \( \mathcal{Z}^R_{t_j} \), and \( \mathcal{Z}^I_{t_j} \), purposefully designed to accommodate the specific characteristics of wave equation solutions. Each network is constructed using a gated dual-branch framework that integrates two parallel nonlinear transformation paths. One branch applies a sequence of fully connected layers with hyperbolic tangent (Tanh) activations, while the other, inspired by \cite{NEURIPS2020_53c04118}, mirrors this structure but employs sinusoidal activation functions. This configuration equips the network to capture a wide spectrum of input features, from less curved regions to highly oscillatory or periodic behaviors.
\\
[4pt]
Each branch contains two hidden layers: the first linear layer maps the input of dimension \( d \) to a hidden width of either \( d + h \) (in the case of \( \mathcal{U}^R_{t_j} \) and \( \mathcal{U}^I_{t_j} \)) or \( d \times h \) (for \( \mathcal{Z}^R_{t_j} \) and \( \mathcal{Z}^I_{t_j} \)), where \( h \) is a tunable hidden expansion parameter. This is followed by a Softplus activation and a second linear layer of the same width, ending with either a Tanh or sinusoidal activation.\\
[4pt]
The outputs of the two branches are blended using a soft gating mechanism, implemented as a learnable single-neuron feedforward layer followed by a sigmoid activation. This mechanism produces a scalar gating value in \( (0, 1) \) for each input, which is used to compute a convex combination of the two branch outputs. The resulting gated representation is then passed through a final fusion layer, consisting of a single linear transformation. For \( \mathcal{U}^R_{t_j} \) and \( \mathcal{U}^I_{t_j} \), the fusion layer maps the hidden features to a scalar output, while for \( \mathcal{Z}^R_{t_j} \) and \( \mathcal{Z}^I_{t_j} \), it projects to a vector in \( \mathbb{R}^d \). A diagram of the architecture is provided to visually illustrate the computational structure and data flow.
\begin{figure}[H]
    \centering
    \includegraphics[width=0.40\linewidth]{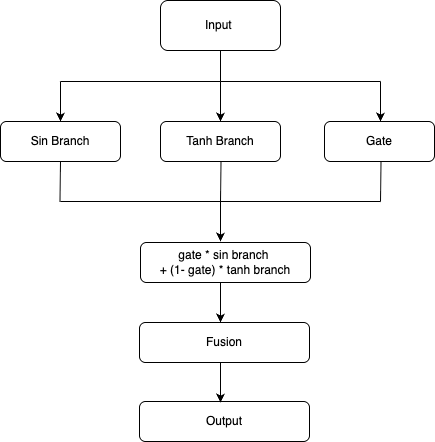}
    \caption{Illustration of the architecture of the proposed neural network.}
    \label{gated_nn}
\end{figure}
\noindent Our code is available at GitHub: \url{https://github.com/HenryCHEUNG7373/Schrodinger_Feynman_Kac}.  
\begin{example}
We consider the following linear Schrodinger equation in $d = 1$:
\begin{equation}\label{Linear_example}
  \begin{array}{l}
    \begin{split}
      i \partial_t u(t,x) & = \frac{1}{2}\Delta u (t,x), \;\; t \leq T;\quad
       u(T,x)  = e^{i\sqrt{2}x},
    \end{split}
  \end{array}
\end{equation}
which admits the solution $u(t,x):= e^{i(\sqrt{2}x-(T-t))}$. We choose $T = 0.5$, $M = 25$, batch size to be $4096$ and train each networks with $30$ epochs. The results are plotted in the following figures.
\begin{figure}[H]
    \centering
    \includegraphics[width=0.49\linewidth]{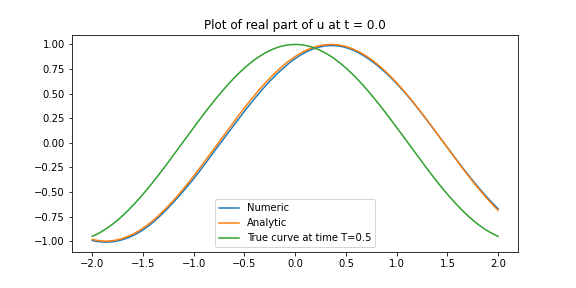}
    \includegraphics[width=0.49\linewidth]{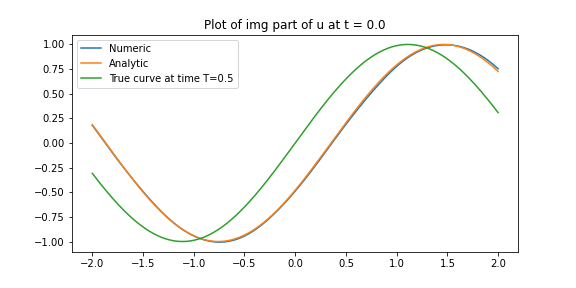}
    \caption{Real \& imaginary parts of $u(0,\cdot)$ and its estimate.}
    \label{ri_0_linear}
\end{figure}

\begin{table}[H]\label{table_l}
\caption{Average value and standard deviation of $u(0,0)$ over 10 independent runs.}
\begin{center}
\begin{tabular}{ | m{5em} | m{5em} | m{12em}| m{12em} | } 
  \hline
  Parts &True value& Average estimated value & Standard deviation \\ 
  \hline
  Real &0.8776 & 0.8630 & 0.0122\\ 
  \hline
  Imaginary &-0.4794 & -0.4728 & 0.0115\\ 
  \hline
\end{tabular}
\end{center}
\end{table}
\end{example}
\begin{example}
\label{nonlinear_example}
Let $d = 1$ and consider the following nonlinear Schrodinger equation:
\begin{equation}\label{Nonlinear_example}
  \begin{array}{l}
    \begin{split}
      i \partial_t u(t,x) & = \frac{1}{2}\Delta u (t,x) + |u(t,x)|^2u(t,x), \;\; t \leq T;\quad
       u(T,x)  = \sech (x)e^{ix},
    \end{split}
  \end{array}
\end{equation}
which has solution $u(t,x):= \sech(x-(T-t))e^{ix}$. Due to high nonlinearity, we take a big $M=64$. For other parameters, we choose $T = 0.5$, batch size to be $4096$ and train each networks with $10$ epochs. The results are plotted in the following figures.
\begin{figure}[H]
    \centering
    \includegraphics[width=0.49\linewidth]{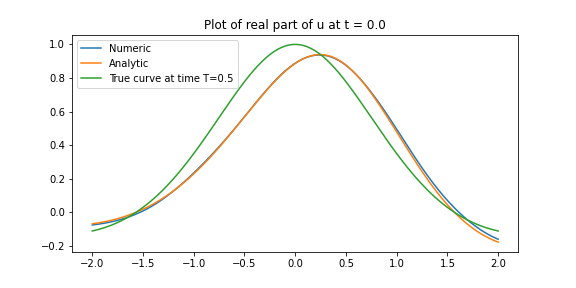}
    \includegraphics[width=0.49\linewidth]{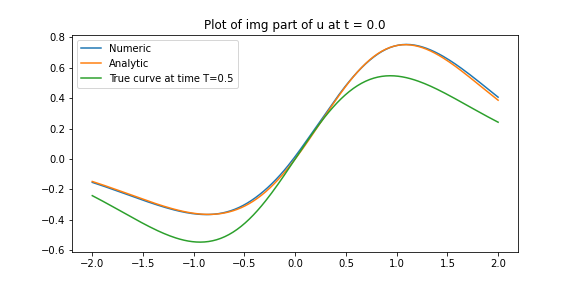}
    \caption{Real \& imaginary parts of $u(0,\cdot)$ and its estimate.}
    \label{r_0_nonlinear}
\end{figure}
\begin{table}[H]\label{table_nl}
\caption{Average value and standard deviation of $u(0,0)$ over 10 independent runs.}
\begin{center}
\begin{tabular}{ | m{5em} | m{5em} | m{12em}| m{12em} | } 
  \hline
  Parts &True value& Average estimated value & Standard deviation \\ 
  \hline
  Real &0.8868 & 0.8723 & 0.0174\\ 
  \hline
  Imaginary &0 & 0.0164 & 0.0142\\ 
  \hline
\end{tabular}
\end{center}
\end{table}
\end{example}
\noindent
The true curves at time $T=0.5$ correspond to the evaluations of the terminal functions, specifically $e^{i\sqrt{2}x}$ in the linear case and $\sech(x)e^{ix}$ in the nonlinear case. The figures above indicate that the computed numerical solution closely approximates the analytical solution at time $t=0$, while both remain noticeably distinct from the prescribed terminal conditions. This is further supported by Tables \ref{table_l} and \ref{table_nl}, where, combining the real and imaginary parts, the relative $L^2$ error, calculated as 
$
\sqrt{\frac{{(\hat{x}_{\text{real}} - x_{\text{real}})^2 + (\hat{x}_{\text{img}} - x_{\text{img}})^2}}{x_{\text{real}}^2 + x_{\text{img}}^2}}
$
for the true value $x = x_{\text{real}} + ix_{\text{img}}$ and the predicted value $\hat{x} =\hat{x}_{\text{real}} + i\hat{x}_{\text{img}}$, is approximately $1.6\%$ in the linear case and around $2.5\%$ in the nonlinear case.\\
[4pt]
In addition to the above one-dimensional examples, we further provide the following linear and nonlinear numerical results  in high-dimensional spaces.
\begin{example}\label{D48_Lin}
    Let $d=48$ and consider the linear Schrödinger equation 
    \begin{equation}\label{D48_Linear_example}
  \begin{array}{l}
    \begin{split}
      i \partial_t u(t,x) & = \frac{1}{2}\Delta u (t,x), \;\; t \leq T;\quad
       u(T,x)  = e^{\sqrt{\frac{2}{d}}i\sum_{j=1}^d x_j}.
    \end{split}
  \end{array}
\end{equation}
    This system admits the exact solution $u(t,x) = e^{i\left(\sqrt{\frac{2}{d}}\sum_{j=1}^d x_j - (T-t)\right)}$. We choose $T=0.5, M=25$, batch size to be 245760 and train each networks with 30 epochs. The results are plotted in the following figures at $t=0$ and $x=(x_1,0,\cdots,0)$.
    \begin{figure}[H]
    \centering
    \includegraphics[width=0.49\linewidth]{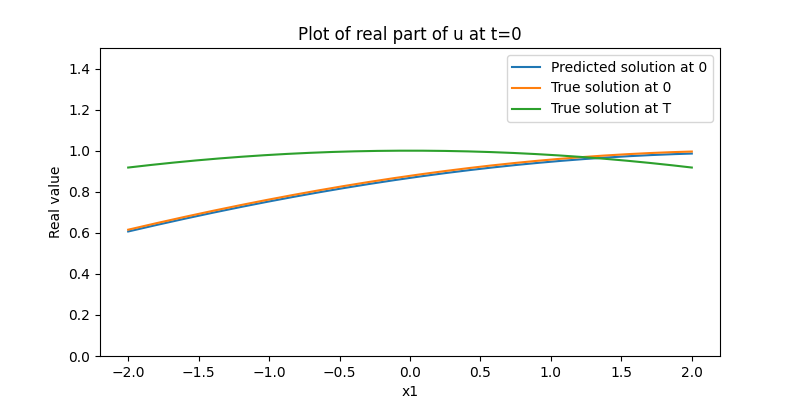}
    \includegraphics[width=0.49\linewidth]{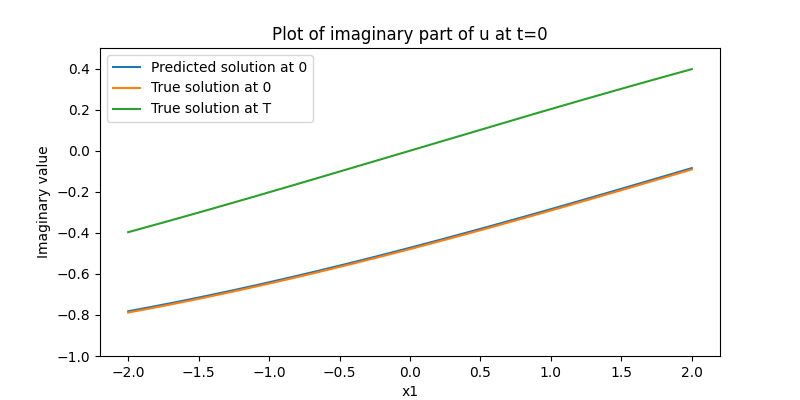}
    \caption{Real \& imaginary parts of $u(0,x)$ and its estimate with $x=(x_1,0,\cdots,0)$.}
    \label{D48_0_linear}
\end{figure}

\begin{table}[H]\label{D48_L}
\begin{center}
\begin{tabular}{|l|l|l|l|}
\hline
Parts     & True value & Average estimated value & Standard deviation \\ \hline
Real      & 0.8776     & 0.8649                  & 0.0074             \\ \hline
Imaginary & -0.4794    & -0.4700                 & 0.0023             \\ \hline
\end{tabular}
\end{center}
\caption{Average value and standard deviation of $u(0,\vec 0)$ over 10 independent runs}
\end{table}

\end{example}

\begin{example}\label{D48_Non_YangSol}
    Let $d=48$ and denote $\mathcal S:=\frac{1}{d}\sum_{j=1}^d x_j$. Consider the nonlinear Schrödinger equation
    \begin{equation}\label{Nonlinear_example_Yang}
  \begin{array}{l}
    \begin{split}
      i \partial_t u(t,x) & = \frac{1}{2}\Delta u (t,x) + f(t,x,u), \;\; t \leq T;\quad
       u(T,x)  = e^{i\mathcal S}\sech (\mathcal S),
    \end{split}
  \end{array}
\end{equation}
    with the nonlinear term 
    $$f(t,x,u) := |u|^2u + \left( 1+\frac{1}{d}\sech^2(\mathcal S) + \frac{i}{d}\tanh(\mathcal S)\right)e^{i\left[T-t+\mathcal S\right]}\sech(\mathcal S) - e^{i\left[T-t+\mathcal S\right]}\sech^3(\mathcal S).$$ 
    
This system admits the exact solution $u(t,x) = e^{i\left(T-t+\mathcal S\right)}\sech(\mathcal S)$. We choose $T=0.5, M=50$, batch size to be 245760 and train each networks with 30 epochs. The results are plotted in the following Figure \ref{D48_0_Non_YangSol} at $t=0$ and $x=(x_1,0,\cdots,0)$. We further provide Figure \ref{D48_0_Non_YangSol_Qiu} at $t=0$ and $x=(x_1,\cdots,x_1)$ to show that our algorithm is capable of approximating solutions with more curving behaviors in high-dimensional nonlinear problems.
\begin{figure}[H]
    \centering
    \includegraphics[width=0.49\linewidth]{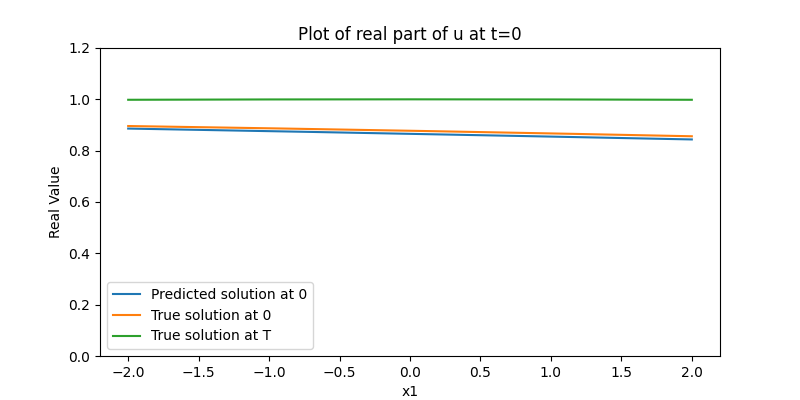}
    \includegraphics[width=0.49\linewidth]{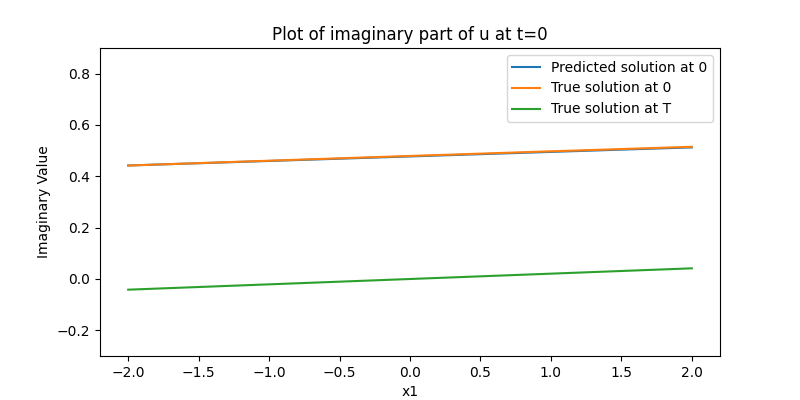}
    \caption{Real \& imaginary parts of $u(0,x)$ and its estimate with $x=(x_1,0,\cdots,0)$.}
    \label{D48_0_Non_YangSol}
\end{figure}
\begin{figure}[H]
    \centering
    \includegraphics[width=0.49\linewidth]{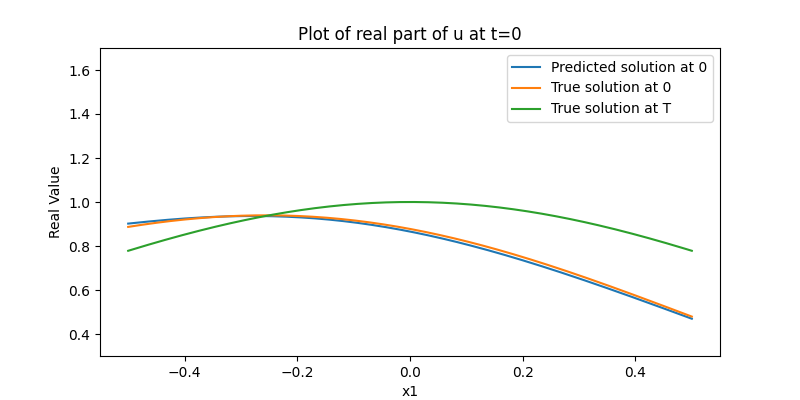}
    \includegraphics[width=0.49\linewidth]{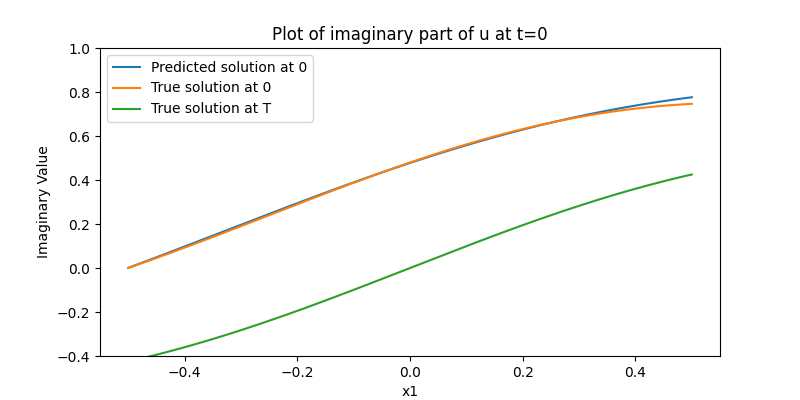}
    \caption{Real \& imaginary parts of $u(0,x)$ and its estimate with $x=(x_1,\cdots,x_1)$.}
    \label{D48_0_Non_YangSol_Qiu}
\end{figure}

\begin{table}[H]\label{D48_Y}
\begin{center}
\begin{tabular}{|l|l|l|l|}
\hline
Parts     & True value & Average estimated value & Standard deviation              \\ \hline
Real      & 0.8776     & 0.8662                  & $8.96\times10^{-4}$ \\ \hline
Imaginary & 0.4794     & 0.4773                  & $4.40\times10^{-4}$ \\ \hline
\end{tabular}
\end{center}
\caption{Average value and standard deviation of $u(0,\vec 0)$ over 10 independent runs.}
\end{table}
\end{example}
\noindent From the above figures, again we observe that our algorithm fits the true solution reasonably well even in the higher dimensional cases with $d=48$. Table \ref{D48_L} and \ref{D48_Y} further supports our numerical results where the relative $L^2$ error can be computed in the same way as in the one-dimensional cases. The linear Example \ref{D48_Lin} exhibits a relative error around $1.58\%$ and in the nonlinear Example \ref{D48_Non_YangSol}, it is approximately $1.16\%$.

\section{Convergence Analysis of Algorithm \ref{alg:cap}}
\subsection{Assumption and notations}
Throughout this section, we assume the following:
\begin{enumerate}
    \item [(i)] The function $u$ is a \textit{bounded} solution to the Schr\"odinger equation \eqref{backward-NSch} with its derivatives $\partial_t u$, $\partial_x u$, $\partial_{xx}u$, $\partial_{xxx}u$, $\partial_{xxxx}u$, $\partial_{t}\partial_{x}u$, $\partial_t\partial_{xx}u$ well defined as continuous functions.
    \item [(ii)] The derivatives of $u$ in (i) above belong to $L^{\infty}([0,T];L^2(\mathbb{R}^d;\mathbb{C}))$ with their norms less than $K$.
    \item [(iii)] There exists $L>0$ such that for all $x,x'\in\bR^d$, $y,y'\in \bC$, $t,t'\in[0,T]$, and $\phi\in C(\bR^d;\mathbb C)$, 
    \begin{align*}
    |f(t,x,y) - f(t,x,y')| & \leq L(|y - y'|),
    \\
    |f(t,x,\phi(x)) - f(t',x',\phi(x))|  & \leq  L \left(\sqrt{\rho(|t-t'|)} +|x-x'|\right) |\phi(x)|,     
\end{align*}
where $\rho:[0,\infty)\rightarrow [0,\infty)$ is a continuous increasing function with $\rho(0)=0$.
\end{enumerate}
    The above conditions (i)--(iii) are assumed for simplicity and to avoid cumbersome arguments. Relaxations of these conditions are feasible.
    Here, we assume the Lipschitz-continuity of $f$ in $y$ in condition (iii). In certain nonlinear cases, such as in  \eqref{Nonlinear_example}, the function $f$ takes the form $f(t,x,u^R,u^I)= |u|^2u$. When confined to bounded solutions as assumed in (i), we may truncate this function by replacing it with $f^N(u)=\Phi^N(|u|^2u)$, where $N>0$ is sufficiently large and $\Phi^N$ is a smooth, compactly supported function that satisfies $\Phi^N(x)=x$ for $|x|\leq N$.   Also, the solution to \eqref{Nonlinear_example} is explicitly given as  $u= \sech(x-(T-t))e^{ix}$ which obviously satisfies both (i) and (ii). Therefore, the example \eqref{Nonlinear_example} actually fits in well with the above assumption.
\\[4pt]
To avoid cumbersome notations, we further assume that $d = 1$, while the multi-dimensional cases follow analogously. Also, w.l.o.g., the time partition is evenly spaced, i.e., $t_{j+1}-t_j = \delta t$ for $j=0,\ldots,M-1$. 
\\[4pt]
Inspired by \cite{hure2020deep}, we first introduce an auxiliary system.  For each $j$, given $\widehat{\mathcal{U}}^R_{t_{j+1}}$, $\widehat{\mathcal{U}}^I_{t_{j+1}}$, $\widehat{\mathcal{Z}}^R_{t_{j+1}}$ and $\widehat{\mathcal{Z}}^I_{t_{j+1}}$, we define the following:
\begin{equation}
\label{true_discrete}
\left\{
\begin{split}
    \widehat{\mathcal{V}}_{t_j}^R := &\mathbb{E}_j\widehat{\mathcal{U}}_{t_{j+1}}^R-\frac{\sqrt{\nu}}{2}\mathbb{E}_j\left[\left(\widehat{\mathcal{Z}}_{t_{j+1}}^R+\widehat{\mathcal{Z}}_{t_{j+1}}^I\right)\delta W_{t_j}\right]-f^I(t_j,X_{t_j},\widehat{\mathcal{V}}_{t_j}^R,\widehat{\mathcal{V}}_{t_j}^I)\delta t,
    \\
    \widehat{\mathcal{V}}_{t_j}^I := &\mathbb{E}_j\widehat{\mathcal{U}}_{t_{j+1}}^I+\frac{\sqrt{\nu}}{2}\mathbb{E}_j\left[\left(\widehat{\mathcal{Z}}_{t_{j+1}}^R-\widehat{\mathcal{Z}}_{t_{j+1}}^I\right)\delta W_{t_j}\right]+f^R(t_j,X_{t_j},\widehat{\mathcal{V}}_{t_j}^R,\widehat{\mathcal{V}}_{t_j}^I) \delta t,
    \\
    \widehat{\mathcal{W}}_{t_j}^R := &\frac{1}{\delta t\sqrt{\nu}}\mathbb{E}_j\left[\left(\widehat{\mathcal{U}}_{t_{j+1}}^R+\widehat{\mathcal{U}}_{t_{j+1}}^I\right)\delta W_{t_j}\right]-\frac{1}{\delta t}\mathbb{E}_j \left[\widehat{\mathcal{Z}}^I_{t_{j+1}}(\delta W_{t_j})^2\right],\\
    \widehat{\mathcal{W}}_{t_j}^I := &\frac{1}{\delta t\sqrt{\nu}}\mathbb{E}_j\left[\left(\widehat{\mathcal{U}}_{t_{j+1}}^I-\widehat{\mathcal{U}}_{t_{j+1}}^R\right)\delta W_{t_j}\right]+\frac{1}{\delta t}\mathbb{E}_j \left[\widehat{\mathcal{Z}}^R_{t_{j+1}}(\delta W_{t_j})^2\right],
\end{split}
\right.
\end{equation}
where $\mathbb{E}_j$ denotes the conditional expectation given $W_{t_1},\dots,W_{t_j}$.
Since $f$ is Lipschitz, by a standard fixed point argument the above system has unique solution for $\delta t$ small enough. From the Markov property of the involved processes, there exist deterministic functions $\hat{v}^R_j$, $\hat{v}^I_j$, $\hat{w}^R_j$ and $\hat{w}^I_j$ such that
\begin{align}
\label{1028}
    \widehat{\mathcal{V}}^R_{t_j} = \hat{v}_j^R(X_{t_j}),\quad \widehat{\mathcal{V}}^I_{t_j} = \hat{v}_j^I(X_{t_j}),\quad\widehat{\mathcal{W}}^R_{t_j} = \hat{w}_j^R(X_{t_j}),\quad\widehat{\mathcal{W}}^I_{t_j} = \hat{w}_j^I(X_{t_j}),\quad j=0,\ldots,M-1. 
\end{align}
Further, by martingale representation theorem there exist integrable processes $\widehat{Z}_t^R$, $\widehat{Z}^I_t$ such that 
{\small
\begin{equation}
\label{martingale_represented_U_V}
\left\{
\begin{split}    \widehat{\mathcal{U}}_{t_{j+1}}^R =& \widehat{\mathcal{V}}_{t_j}^R+ f^I(t_j,X_{t_j},\widehat{\mathcal{V}}_{t_j}^R,\widehat{\mathcal{V}}_{t_j}^I)\delta t+\frac{\sqrt{\nu}}{2}(\widehat{\mathcal{Z}}_{t_{j+1}}^R+\widehat{\mathcal{Z}}_{t_{j+1}}^I)\delta W_{t_j} -\frac{\sqrt{\nu}}{2}(\widehat{\mathcal{W}}_{t_j}^R+\widehat{\mathcal{W}}_{t_j}^I)\delta W_{t_j}+\sqrt{\nu}\int_{t_j}^{t_{j+1}}\widehat{Z}_s^RdW_s,
\\
    \widehat{\mathcal{U}}_{t_{j+1}}^I =& \widehat{\mathcal{V}}_{t_j}^I- f^R(t_j,X_{t_j},\widehat{\mathcal{V}}_{t_j}^R,\widehat{\mathcal{V}}_{t_j}^I)\delta t-\frac{\sqrt{\nu}}{2}(\widehat{\mathcal{Z}}_{t_{j+1}}^R-\widehat{\mathcal{Z}}_{t_{j+1}}^I)\delta W_{t_j} +\frac{\sqrt{\nu}}{2}(\widehat{\mathcal{W}}_{t_j}^R-\widehat{\mathcal{W}}_{t_j}^I)\delta W_{t_j}+\sqrt{\nu}\int_{t_j}^{t_{j+1}}\widehat{Z}_s^IdW_s.
\end{split}
\right.
\end{equation}
}
In this way, it follows straightforwardly that
\begin{align}
     \widehat{\mathcal{W}}_{t_j}^R &= \frac{1}{\delta t}\mathbb{E}_j\int_{t_j}^{t_{j+1}}\widehat{Z}_s^Rds,\quad
    \widehat{\mathcal{W}}_{t_j}^I = \frac{1}{\delta t}\mathbb{E}_j\int_{t_j}^{t_{j+1}}\widehat{Z}_s^Ids.\label{1042}
\end{align}
\\[4pt]
On the other hand, we define the $L^2$-projections of the processes $Z^R$ and $Z^I$ in the true solution to BSDE \eqref{BSDE-Sch}:
\begin{align}
    \overline{Z}_{t_j}^R := \frac{1}{\delta t}\mathbb{E}_j\int_{t_j}^{t_{j+1}}Z_s^Rds,\quad
    \overline{Z}_{t_j}^I := \frac{1}{\delta t}\mathbb{E}_j\int_{t_j}^{t_{j+1}}Z_s^Ids.\label{L2_true_solution_img}
\end{align}
Note that from \eqref{reltn-intro-Sch} and \eqref{1028}, the random variables $Y_{t_j}^R$, $\widehat{\mathcal{V}}_{t_j}^R$, $Z_{t_j}^R$, $\widehat{Z}_{t_j}^R$, $\widehat{\mathcal{W}}_{t_j}^R$ and their imaginary counterparts can be interpreted as deterministic functions, with $X_{t_j}=\mathcal X_{t_j}=x + \sqrt{\nu} W_{t_j}$ as input.
\\[4pt]
Denote $h^R := \nabla u^R(t,x)$, $h^I := \nabla u^I(t,x)$ and recall the relation \eqref{reltn-intro-Sch}, and the forward dynamics $(\mathcal{X}_s)_{0\leq s\leq T}$ and its Euler discretization $X_s$. Define the error terms as follows:
\vspace{0.3cm}
\hrule
\begin{align*}
    \RN{1}_{A,j}^R :=&\frac{\nu}{2}\Big[\partial_x h^R(t_j,\mathcal{X}_{t_j})\delta t-\partial_x h^R(t_j,\mathcal{X}_{t_j})(\delta W_{t_j})^2+\partial_x h^I(t_j,\mathcal{X}_{t_j})\delta t-\partial_x h^I(t_j,\mathcal{X}_{t_j})(\delta W_{t_j})^2\Big]\\
    \RN{1}_{B,j}^R :=& \sqrt{\nu}\int_{t_j}^{t_{j+1}}(Z_s^R+Z_s^I)*dW_s - \frac{\sqrt{\nu}}{2}(Z_{t_{j+1}}^R+Z_{t_{j+1}}^I - Z_{t_j}^R - Z_{t_j}^I)\delta W_{t_j}-\RN{1}_{A,j}^R\\
    \RN{2}_{A,j}^R :=&\sqrt{\nu}\int_{t_j}^{t_{j+1}}Z_s^R dW_s - \sqrt{\nu}\ \overline{Z}_{t_j}^R\delta W_{t_j}\\
    \RN{2}_{B,j}^R :=&\sqrt{\nu}\ \overline{Z}_{t_j}^R\delta W_{t_j} - \sqrt{\nu}Z_{t_j}^R\delta W_{t_j}
    \\\RN{3}_j^R:=&\int_{t_j}^{t_{j+1}}f^I(s,\mathcal X_s,Y_s^{R},Y_s^I)ds - f^I(t_j,\mathcal X_{t_j},Y_{t_j}^R,Y_{t_j}^I)\delta t\\
    \RN{4}_j^R:=&\sqrt{\nu}\int_{t_j}^{t_{j+1}}\widehat{Z}_s^RdW_s - \sqrt{\nu}\widehat{\mathcal{W}}_{t_j}^R\delta W_{t_j}\\
    \RN{1}_{A,j}^I :=&-\frac{\nu}{2}\Big[\Big(\partial_x h^R(t_j,\mathcal{X}_{t_j})\delta t-\partial_x h^R(t_j,\mathcal{X}_{t_j})(\delta W_{t_j})^2\Big)-\Big(\partial_x h^I(t_j,\mathcal{X}_{t_j})\delta t-\partial_x h^I(t_j,\mathcal{X}_{t_j})(\delta W_{t_j})^2\Big)\Big]\\
    \RN{1}_{B,j}^I :=& -\sqrt{\nu}\int_{t_j}^{t_{j+1}}(Z_s^R-Z_s^I)*dW_s + \frac{\sqrt{\nu}}{2}(Z_{t_{j+1}}^R-Z_{t_{j+1}}^I - Z_{t_j}^R + Z_{t_j}^I)\delta W_{t_j}-\RN{1}_{A,j}^I\\
    \RN{2}_{A,j}^I :=&\sqrt{\nu}\int_{t_j}^{t_{j+1}}Z_s^I dW_s - \sqrt{\nu}\ \overline{Z}_{t_j}^I\delta W_{t_j}\\
    \RN{2}_{B,j}^I :=&\sqrt{\nu}\ \overline{Z}_{t_j}^I\delta W_{t_j} - \sqrt{\nu}Z_{t_j}^I\delta W_{t_j}\\
    \RN{3}_j^I:=&-\int_{t_j}^{t_{j+1}}f^R(s,\mathcal X_s,Y_s^{R},Y_s^I)ds + f^R(t_j,\mathcal X_{t_j},Y_{t_j}^R,Y_{t_j}^I)\delta t\\
\RN{4}_j^I:=&\sqrt{\nu}\int_{t_j}^{t_{j+1}}\widehat{Z}_s^IdW_s - \sqrt{\nu}\widehat{\mathcal{W}}_{t_j}^I\delta W_{t_j}.
\end{align*}
\hrule
\vspace{0.4cm}
By \eqref{BSDE-Sch}, for each $j\in\{0,\ldots,M-1\}$ the true solution satisfies:
\begin{equation}
\label{true_Y_solution}
\left\{\begin{split}
    &Y_{t_{j+1}}^R - \frac{\sqrt{\nu}}{2}(Z_{t_{j+1}}^R+Z_{t_{j+1}}^I)\delta W_{t_j} -\RN{1}_{B,j}^R-\RN{2}_{B,j}^R-\RN{3}_j^R -f^I(t_j,\mathcal X_{t_j},Y_{t_j}^R,Y_{t_j}^I)\delta t\\
    =& Y_{t_j}^R - \frac{\sqrt{\nu}}{2}(Z_{t_j}^R+Z_{t_j}^I)\delta W_{t_j}+\sqrt{\nu}Z_{t_j}^R\delta W_{t_j}+\RN{2}_{A,j}^R+\RN{1}_{A,j}^R,\\
    &Y_{t_{j+1}}^I + \frac{\sqrt{\nu}}{2}(Z_{t_{j+1}}^R-Z_{t_{j+1}}^I)\delta W_{t_j} -\RN{1}_{B,j}^I-\RN{2}_{B,j}^I-\RN{3}_j^I +f^R(t_j,\mathcal X_{t_j},Y_{t_j}^R,Y_{t_j}^I)\delta t\\
    =& Y_{t_j}^I + \frac{\sqrt{\nu}}{2}(Z_{t_j}^R-Z_{t_j}^I)\delta W_{t_j}+\sqrt{\nu}Z_{t_j}^I\delta W_{t_j}+\RN{2}_{A,j}^I+\RN{1}_{A,j}^I.
\end{split}\right.
\end{equation}
Meanwhile, \eqref{martingale_represented_U_V} can be represented as  
{\small
\begin{equation}
\label{discrete_U_V}
\left\{\begin{split}
    \widehat{\mathcal{U}}_{t_{j+1}}^R - \frac{\sqrt{\nu}}{2}(\widehat{\mathcal{Z}}_{t_{j+1}}^R+\widehat{\mathcal{Z}}_{t_{j+1}}^I)\delta W_{t_j} -f^I(t_j,X_{t_j},\widehat{\mathcal{V}}_{t_j}^R,\widehat{\mathcal{V}}_{t_j}^I)\delta t=& \widehat{\mathcal{V}}_{t_j}^R - \frac{\sqrt{\nu}}{2}(\widehat{\mathcal{W}}_{t_j}^R+\widehat{\mathcal{W}}_{t_j}^I)\delta W_{t_j}+\sqrt{\nu}\widehat{\mathcal{W}}_{t_j}^R\delta W_{t_j}+\RN{4}_j^R,\\
    \widehat{\mathcal{U}}_{t_{j+1}}^I + \frac{\sqrt{\nu}}{2}(\widehat{\mathcal{Z}}_{t_{j+1}}^R-\widehat{\mathcal{Z}}_{t_{j+1}}^I)\delta W_{t_j}+f^R(t_j,X_{t_j},\widehat{\mathcal{V}}_{t_j}^R,\widehat{\mathcal{V}}_{t_j}^I)\delta t
    =& \widehat{\mathcal{V}}_{t_j}^I + \frac{\sqrt{\nu}}{2}(\widehat{\mathcal{W}}_{t_j}^R-\widehat{\mathcal{W}}_{t_j}^I)\delta W_{t_j}+\sqrt{\nu}\widehat{\mathcal{W}}_{t_j}^I\delta W_{t_j}+\RN{4}_j^I.
\end{split}\right.
\end{equation}}
Combining equations \eqref{true_Y_solution} and \eqref{discrete_U_V} yields
{\small
\begin{equation}
\label{1095}
\left\{
    \begin{split}
        &Y_{t_j}^R - \frac{\sqrt{\nu}}{2}(Z_{t_j}^R+Z_{t_j}^I)\delta W_{t_j}+\sqrt{\nu}Z_{t_j}^R\delta W_{t_j}+\RN{1}_{A,j}^R+\RN{2}_{A,j}^R- \Big[\widehat{\mathcal{V}}_{t_j}^R - \frac{\sqrt{\nu}}{2}(\widehat{\mathcal{W}}_{t_j}^R+\widehat{\mathcal{W}}_{t_j}^I)\delta W_{t_j}+\sqrt{\nu}\widehat{\mathcal{W}}_{t_j}^R\delta W_{t_j}+\RN{4}_j^R\Big]\\
        &
        = Y_{t_{j+1}}^R - \frac{\sqrt{\nu}}{2}(Z_{t_{j+1}}^R+Z_{t_{j+1}}^I)\delta W_{t_j} -\RN{1}_{B,j}^R-\RN{2}_{B,j}^R-\RN{3}_j^R -f^I(t_j,\mathcal X_{t_j},Y_{t_j}^R,Y_{t_j}^I)\delta t 
        \\
        &\quad 
        -\Big[\widehat{\mathcal{U}}_{t_{j+1}}^R - \frac{\sqrt{\nu}}{2}(\widehat{\mathcal{Z}}_{t_{j+1}}^R+\widehat{\mathcal{Z}}_{t_{j+1}}^I)\delta W_{t_j} -f^I(t_j,X_{t_j},\widehat{\mathcal{V}}_{t_j}^R,\widehat{\mathcal{V}}_{t_j}^I)\delta t\Big],
        \\
         &
         Y_{t_j}^I + \frac{\sqrt{\nu}}{2}(Z_{t_j}^R-Z_{t_j}^I)\delta W_{t_j}+\sqrt{\nu}Z_{t_j}^I\delta W_{t_j}+\RN{1}_{A,j}^I+\RN{2}_{A,j}^I - \Big[\widehat{\mathcal{V}}_{t_j}^I + \frac{\sqrt{\nu}}{2}(\widehat{\mathcal{W}}_{t_j}^R-\widehat{\mathcal{W}}_{t_j}^I)\delta W_{t_j}+\sqrt{\nu}\widehat{\mathcal{W}}_{t_j}^I\delta W_{t_j}+\RN{4}_j^I\Big]
         \\
         &
         = Y_{t_{j+1}}^I + \frac{\sqrt{\nu}}{2}(Z_{t_{j+1}}^R-Z_{t_{j+1}}^I)\delta W_{t_j} -\RN{1}_{B,j}^I-\RN{2}_{B,j}^I-\RN{3}_j^I +f^R(t_j,\mathcal X_{t_j},Y_{t_j}^R,Y_{t_j}^I)\delta t
         \\
         &\quad
         -\Big[  \widehat{\mathcal{U}}_{t_{j+1}}^I + \frac{\sqrt{\nu}}{2}(\widehat{\mathcal{Z}}_{t_{j+1}}^R-\widehat{\mathcal{Z}}_{t_{j+1}}^I)\delta W_{t_j}+f^R(t_j,X_{t_j},\widehat{\mathcal{V}}_{t_j}^R,\widehat{\mathcal{V}}_{t_j}^I)\delta t\Big].
    \end{split}
    \right.
\end{equation}
}
\subsection{Convergence analysis}
To simplify notations, by convention we write $\int_{\mathbb R} g$ short for the integral $\int_{\mathbb R}g(x) \, dx$ in this work.  Indeed, integration w.r.t. $x$ helps us obtain estimates which are non-standard in the BSDE literature.
 For $\phi=Y,Z,\widehat{\mathcal{V}}, \widehat{\mathcal{W}}, \widehat{\mathcal{U}}, \widehat{\mathcal{Z}},\widehat{Z}$, recall the notations $\phi=\phi^R + i\phi^I$ and the facts like $\mathbb E\int_{\bR}|\phi^{R/I}(t_j,x+\sqrt{\nu}W_{t_j})|^2=\int_{\bR}|\phi^{R/I}(t_j,x+\sqrt{\nu}W_{t_j})|^2=\int_{\bR}|\phi^{R/I}(t_j,x )|^2$.  
 \\[4pt]
 Define the following errors caused by the neural network approximations:
\begin{align*}
    \varepsilon_j^{v}:=& \int_{\mathbb{R}} \inf_\xi \mathbb{E}|\hat{v}_{t_j}(X_{t_j})-\mathcal{U}_{t_j}(X_{t_j};\xi)|^2,
    \quad\varepsilon_j^{\mathcal W}:= \int_{\mathbb{R}}\inf_\eta  \mathbb{E}|\widehat{\mathcal{W}}_{t_j}(X_{t_j})-\mathcal{Z}_{t_j}(X_{t_j};\eta)|^2.
\end{align*}
For Algorithm \ref{alg:cap}, we have the following convergence analysis whose proof follows straightforwardly by combining Lemmas \ref{minimization_neural_network} and \ref{Convergence_prelim_result}.
\begin{thm}
\label{convergence_Y}
There exists a constant $C>0$ depending on $K$, $L$, $T$, $\nu$ such that the following holds:
    \begin{align}
    \label{1085}
    &\max_{j=0,\ldots,M-1} \int_{\mathbb{R}}\Big[|Y_{t_j}-\widehat{\mathcal{U}}_{t_j}|^2 
    +\frac{(\delta t)\nu}{2} |\overline{Z}_{t_j}-\widehat{\mathcal{Z}}_{t_j}|^2 \Big] 
    \leq 
    \frac{C}{\delta t}\sum_{j=0}^{M-1}\varepsilon_j^{v}+ C\nu\sum_{j=0}^{M-1}\varepsilon_j^{\mathcal W} + C[\delta t+\rho(\delta t)].
\end{align}
\end{thm}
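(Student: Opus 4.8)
The plan is to split the total error at each time level into a \emph{discretization error}, comparing the true BSDE data $(Y,\overline Z)$ with the auxiliary ``true discrete'' system $(\widehat{\mathcal V},\widehat{\mathcal W})$ of \eqref{true_discrete} built from the current iterates, and a \emph{network approximation error}, comparing $(\widehat{\mathcal V},\widehat{\mathcal W})$ with the outputs $(\widehat{\mathcal U},\widehat{\mathcal Z})$ of Algorithm \ref{alg:cap}. The first is controlled by the one-step a priori estimate of Lemma \ref{Convergence_prelim_result}, which is driven by the local truncation errors $\RN{1},\RN{2},\RN{3},\RN{4}$ introduced above; the second by the projection estimate of Lemma \ref{minimization_neural_network}, which bounds $\int_{\mathbb R}\mathbb E|\widehat{\mathcal V}_{t_j}-\widehat{\mathcal U}_{t_j}|^2$ by $C\varepsilon_j^{v}$ and $\nu\delta t\int_{\mathbb R}\mathbb E|\widehat{\mathcal W}_{t_j}-\widehat{\mathcal Z}_{t_j}|^2$ by $C\nu\varepsilon_j^{\mathcal W}$ (the mixing of $z^R,z^I$ in $F^R,F^I$ diagonalizes via $|a-b|^2+|a+b|^2=2|a|^2+2|b|^2$, so no ill-conditioning appears). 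The theorem is then the assembly of these two facts.

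Concretely, I would fix $j$, apply the weighted Young inequality $|a+b|^2\le(1+\delta t)|a|^2+(1+(\delta t)^{-1})|b|^2$ to $Y_{t_j}-\widehat{\mathcal U}_{t_j}=(Y_{t_j}-\widehat{\mathcal V}_{t_j})+(\widehat{\mathcal V}_{t_j}-\widehat{\mathcal U}_{t_j})$, and the analogous split of $\overline Z_{t_j}-\widehat{\mathcal Z}_{t_j}=(\overline Z_{t_j}-\widehat{\mathcal W}_{t_j})+(\widehat{\mathcal W}_{t_j}-\widehat{\mathcal Z}_{t_j})$ keeping the weight $\tfrac{\nu\delta t}{2}$. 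On the $(Y,\overline Z)$-versus-$(\widehat{\mathcal V},\widehat{\mathcal W})$ pieces I substitute Lemma \ref{Convergence_prelim_result}, which bounds $\int_{\mathbb R}\big[|Y_{t_j}-\widehat{\mathcal V}_{t_j}|^2+\tfrac{\nu\delta t}{2}|\overline Z_{t_j}-\widehat{\mathcal W}_{t_j}|^2\big]$ by $(1+C\delta t)$ times the step-$(j{+}1)$ error $\int_{\mathbb R}\big[|Y_{t_{j+1}}-\widehat{\mathcal U}_{t_{j+1}}|^2+\tfrac{\nu\delta t}{2}|\overline Z_{t_{j+1}}-\widehat{\mathcal Z}_{t_{j+1}}|^2\big]$ plus the sum of the mean-square local errors $\mathbb E\int_{\mathbb R}(|\RN{1}_{j}|^2+|\RN{2}_{j}|^2+|\RN{3}_j|^2+|\RN{4}_j|^2)$; on the $(\widehat{\mathcal V},\widehat{\mathcal W})$-versus-$(\widehat{\mathcal U},\widehat{\mathcal Z})$ pieces I substitute Lemma \ref{minimization_neural_network}. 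Writing $e_j:=\int_{\mathbb R}\big[|Y_{t_j}-\widehat{\mathcal U}_{t_j}|^2+\tfrac{\nu\delta t}{2}|\overline Z_{t_j}-\widehat{\mathcal Z}_{t_j}|^2\big]$, this produces a recursion $e_j\le(1+C\delta t)e_{j+1}+C(\delta t)^{-1}\varepsilon_j^{v}+C\nu\varepsilon_j^{\mathcal W}+CR_j$, where $R_j$ collects the local truncation errors at step $j$.

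Next I invoke the exact terminal initialization of Algorithm \ref{alg:cap} — $\widehat{\mathcal U}_{t_M}=G$, $\widehat{\mathcal Z}_{t_M}=\nabla G$ — which matches $Y_{t_M}=G(X_{t_M})$ and $Z_{t_M}=\nabla G(X_{t_M})$, so that $e_M=0$; iterating the recursion downward and using $(1+C\delta t)^{M}\le e^{CT}$ gives $\max_j e_j\le e^{CT}\sum_{k=0}^{M-1}\big(C(\delta t)^{-1}\varepsilon_k^{v}+C\nu\varepsilon_k^{\mathcal W}+CR_k\big)$, so that no compounding of the $(\delta t)^{-1}$ factor occurs. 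It then remains to sum the $R_k$: the It\^o--Taylor remainders $\RN{1}_A,\RN{1}_B,\RN{2}_B$ and the $L^2$-projection errors $\RN{2}_A,\RN{4}$ are $O((\delta t)^2)$ per step — using the boundedness and $L^2$-bounds of the derivatives of $u$ from Assumptions (i)--(ii) together with the orthogonality defining $\overline Z$ and $\widehat{\mathcal W}$ — while the driver-discretization term $\RN{3}_j$ is $O(\delta t\,\rho(\delta t))$ per step by Assumption (iii); over $M=T/\delta t$ steps this gives $\sum_k R_k\le C[\delta t+\rho(\delta t)]$, which are precisely the per-step estimates deferred to the appendix. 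Absorbing $e^{CT}$ into $C$ yields \eqref{1085}.

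The step I expect to be the main obstacle is establishing the one-step estimate of Lemma \ref{Convergence_prelim_result} with a coupling constant of the form $1+O(\delta t)$ rather than $1+O(1)$: because the integrals with $*dW_s$ over $[t_j,t_{j+1}]$ in \eqref{BSDE-Sch} depend on the endpoint value $Z_{t_{j+1}}$ (hence on $\widehat{\mathcal Z}_{t_{j+1}}$), the step-$j$ error genuinely couples the $Y$- and $Z$-components at level $j{+}1$, and a naive bound would leave an $O(1)$ factor in front of $\int_{\mathbb R}|\overline Z_{t_{j+1}}-\widehat{\mathcal Z}_{t_{j+1}}|^2$, which would destroy the discrete Gronwall step; closing this requires exploiting the weight-$\tfrac{\nu\delta t}{2}$ $Z$-term at level $j{+}1$ as a genuine buffer, and carefully tracking powers of $\delta t$ through the spatially-integrated estimates, so that in the end $\varepsilon_j^{v}$ emerges with prefactor $(\delta t)^{-1}$ while $\varepsilon_j^{\mathcal W}$, carried throughout with the weight $\nu\delta t$, survives with only prefactor $C\nu$. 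Granting those per-step estimates, the assembly described above is routine.
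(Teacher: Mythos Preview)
Your approach is essentially the paper's: the theorem is obtained by substituting Lemma~\ref{minimization_neural_network} into Lemma~\ref{Convergence_prelim_result}, and the paper's proof is literally one sentence to that effect. One organizational point: Lemma~\ref{Convergence_prelim_result} as stated is \emph{not} the one-step estimate you describe but already the discrete-Gronwall'd maximum bound with $\int_{\mathbb R}|\widehat{\mathcal U}_{t_j}-\widehat{\mathcal V}_{t_j}|^2$ and $\int_{\mathbb R}|\widehat{\mathcal W}_{t_j}-\widehat{\mathcal Z}_{t_j}|^2$ on the right-hand side --- the one-step recursion and the Young-inequality splitting you outline are exactly what happens \emph{inside} its proof (Step~3), so at the level of the theorem there is nothing left to do but plug in Lemma~\ref{minimization_neural_network}.
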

\begin{rmk}
    The right-hand side of \eqref{1085} is the error contributions for Algorithm \ref{alg:cap}, and it consists of four terms. The first two correspond to the error caused by neural network approximations; the better the neural networks are able to approximate or learn the functions $v$ and $w$ for each $j = 0,\cdots,M-1$, the smaller these terms are in the error estimation. From the universal approximation  \cite[theorem $(\RN{1})$]{hornik1989multilayer}, the neural network error terms can be made arbitrarily small for sufficiently large number of neurons and sufficiently many hidden layers. The other two terms  are due to the regularity of $Y$, $Z$ and $f$.
  On the other hand, Theorem \ref{convergence_Y}, mainly addresses the error w.r.t. $Y$ as $\delta t$ tends to zero. This is consisting with Schr\"odinger equation's wave-like feature, which can be illustrated as follows. It\^o formula gives
{\small \begin{align*}
d(Y_s^R)^2 
&= 2 Y_s^R\left[\sqrt{\nu} (Z^R_s+Z^I_s) * dW_s +  f_s^I\, ds + \sqrt{\nu}  Z_s^R\, dW_s\right] + \nu |Z_s^R|^2\, ds,
\\
d(Y^I_s)^2
&=2 Y_s^I\left[-\sqrt{\nu} (Z^R_s-Z^I_s) * dW_s -  f_s^R\, ds + \sqrt{\nu}  Z_s^I\, dW_s\right] + \nu |Z_s^R|^2\, ds.
\end{align*}}
Using integration by parts and the relation \eqref{reltn-intro-Sch}, we obtain
{\small \begin{align*}
\int_{\mathbb{R}} \mathbb{E}|Y_T^R|^2 - \int_{\mathbb{R}} \mathbb{E}|Y_t^R|^2 
&= -\nu \mathbb{E}\int_{\mathbb{R}} \int_t^T Z_s^R Z_s^I\, ds + 2 \mathbb{E} \int_{\mathbb{R}} \int_t^T Y_s^R f_s^I\, ds,
\\
\int_{\mathbb{R}} \mathbb{E}|Y_T^I|^2 - \int_{\mathbb{R}} \mathbb{E}|Y_t^I|^2 
&= \nu \mathbb{E} \int_{\mathbb{R}} \int_t^T Z_s^R Z_s^I\, ds - 2 \mathbb{E} \int_{\mathbb{R}} \int_t^T Y_s^I f_s^R\, ds.
\end{align*}
}Summing these equations leads to the cancellation of the $Z$-terms, highlighting the challenges in deriving non-trivial estimates for $Z_s^R$ and $Z_s^I$.
\end{rmk}
\noindent
For the neural network approximations, we have the following estimate whose proof is more or less standard and postponed to Appendix \ref{appendix:0}.
\begin{lem}
\label{minimization_neural_network}
There exists a constant $C>0$ depending on $L$, $\nu$ such that 
    \begin{align*}
    & \int_{\bR}|\widehat{\mathcal{V}}_{t_j}-\widehat{\mathcal{U}}_{t_j}|^2 
    +\frac{\nu\delta t}{2} \int_{\bR}\Big[|\widehat{\mathcal{W}}_{t_j}-\widehat{\mathcal{Z}}_{t_j}|^2\Big]
    \leq
    C\Big[\varepsilon_j^{v} + (\delta t) \varepsilon_j^{\mathcal W} \Big].
    \end{align*}
\end{lem}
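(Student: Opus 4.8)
The plan is to exploit the defining relations for $\widehat{\mathcal V}_{t_j}$, $\widehat{\mathcal W}_{t_j}$ in \eqref{true_discrete} together with the optimality characterization of the neural network parameters $\theta_j^*=(\xi_j^*,\eta_j^*)$ in Algorithm \ref{alg:cap}. First I would write the loss $L_j(\theta)$ in a form that exposes $\widehat{\mathcal V}_{t_j}^{R/I}$ and $\widehat{\mathcal W}_{t_j}^{R/I}$ as the (conditional) $L^2$-minimizers they are by construction. Concretely, using the martingale representation \eqref{martingale_represented_U_V} and the definitions \eqref{true_discrete}, one checks that for any candidate networks $(\mathcal U_{t_j},\mathcal Z_{t_j})$ the integrand defining $L_j(\theta)$ decomposes, after conditioning on $\mathcal F_{t_j}$ and using the tower property, into
\begin{align*}
\mathbb E\big|\widehat{\mathcal U}^{R}_{t_{j+1}}(X_{t_{j+1}})-F^R(\cdots)\big|^2
&= \mathbb E\big|\widehat{\mathcal V}^R_{t_j}-\mathcal U^R_{t_j}(X_{t_j};\xi)+\tfrac{\sqrt\nu}{2}(\widehat{\mathcal W}^R_{t_j}-\mathcal Z^R_{t_j}(X_{t_j};\eta))\delta W_{t_j}+\cdots\big|^2,
\end{align*}
where the "$\cdots$" are terms orthogonal (in $L^2(\mathbb P)$, conditionally on $\mathcal F_{t_j}$) to everything that is $\mathcal F_{t_j}$-measurable or linear in $\delta W_{t_j}$; the key structural fact is that $\delta W_{t_j}$ is independent of $\mathcal F_{t_j}$ with mean zero and variance $\delta t$, so cross terms between the $\mathcal F_{t_j}$-measurable part and the $\delta W_{t_j}$-linear part vanish in expectation. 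Doing the same for the imaginary component and summing, the loss splits as $L_j(\theta)=\|\widehat{\mathcal V}_{t_j}-\mathcal U_{t_j}(\cdot;\xi)\|^2_{L^2(\mathbb P)}+\tfrac{\nu\,\delta t}{4}\cdot(\text{const})\cdot\|\widehat{\mathcal W}_{t_j}-\mathcal Z_{t_j}(\cdot;\eta)\|^2_{L^2(\mathbb P)}+(\text{terms independent of }\theta)$, modulo constants I would track carefully from the coefficients $\tfrac{\sqrt\nu}{2}$ appearing in $F^{R},F^I$.

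Once that decomposition is in hand, the argument is essentially: minimizing $L_j$ over $\theta$ minimizes each of the two squared-distance terms separately (they depend on disjoint parameter blocks $\xi$ and $\eta$), so $L_j(\theta_j^*)\le L_j(\theta)$ for all $\theta$ gives, after taking infimum over $\xi$ and $\eta$ and integrating in $x$ over $\mathbb R$ (using the measure-preserving property $\int_{\mathbb R}g(x+\sqrt\nu W_{t_j})\,dx=\int_{\mathbb R}g$ noted in the excerpt),
\begin{align*}
\int_{\mathbb R}\mathbb E\big|\widehat{\mathcal V}_{t_j}-\widehat{\mathcal U}_{t_j}\big|^2+\tfrac{\nu\,\delta t}{2}\int_{\mathbb R}\mathbb E\big|\widehat{\mathcal W}_{t_j}-\widehat{\mathcal Z}_{t_j}\big|^2\ \lesssim\ \varepsilon_j^{v}+\delta t\,\varepsilon_j^{\mathcal W},
\end{align*}
which is the claim. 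The Lipschitz dependence of $F^{R},F^I$ on $(u^R,u^I)$ through $f^{R/I}$ enters only through a perturbation term: $F^{R}$ contains $f^I(t_j,X_{t_j},\mathcal U^R_{t_j},\mathcal U^I_{t_j})\delta t$ rather than $f^I$ evaluated at $\widehat{\mathcal V}_{t_j}$, so one picks up an extra $L\,\delta t\,|\widehat{\mathcal V}_{t_j}-\mathcal U_{t_j}|$ contribution; this is absorbed by Young's inequality for $\delta t$ small enough (the $\delta t\,L$ factor being $\le 1/2$, say), which is exactly the regime in which \eqref{true_discrete} was said to be uniquely solvable.

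The main obstacle I anticipate is bookkeeping rather than conceptual: correctly carrying out the conditional orthogonal decomposition of $L_j$ so that the coefficient in front of $\|\widehat{\mathcal W}_{t_j}-\widehat{\mathcal Z}_{t_j}\|^2$ comes out proportional to $\nu\,\delta t$ (and not, say, $\nu\,(\delta t)^2$ or $\nu$), since $\widehat{\mathcal W}$ and $\widehat{\mathcal Z}$ enter $F^{R/I}$ multiplied by $\delta W_{t_j}$ whose second moment is $\delta t$ — it is precisely this factor that makes the $\delta t\,\varepsilon_j^{\mathcal W}$ scaling on the right-hand side match the $\tfrac{\nu\delta t}{2}$ weight on the left. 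A secondary subtlety is that the $\widehat{Z}_s$-martingale-part term in \eqref{martingale_represented_U_V}, namely $\sqrt\nu\int_{t_j}^{t_{j+1}}\widehat Z_s^{R/I}\,dW_s$ minus its $\delta W_{t_j}$-projection, is an additional $\mathcal F_{t_{j+1}}$-measurable term orthogonal to the parameter-dependent pieces; it contributes only to the $\theta$-independent constant in $L_j$ and so drops out of the comparison, but one must verify this orthogonality (it follows from $\widehat{\mathcal W}^{R/I}_{t_j}=\tfrac1{\delta t}\mathbb E_j\int_{t_j}^{t_{j+1}}\widehat Z_s^{R/I}\,ds$ in \eqref{1042} being the best $\delta W_{t_j}$-approximation of that stochastic integral). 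With these two points handled, the remaining estimates are routine applications of Cauchy--Schwarz, Young's inequality, and the isometry for stochastic integrals.
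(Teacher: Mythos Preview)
Your proposal is correct and follows essentially the same route as the paper's proof: you use the martingale representation \eqref{martingale_represented_U_V} and the projection identity \eqref{1042} to split $L_j(\theta)$ into a $\theta$-independent residual plus $\widetilde L_j(\theta)$, observe that the $\delta W_{t_j}$-linear pieces are conditionally orthogonal to the $\mathcal F_{t_j}$-measurable pieces, handle the $f$-Lipschitz cross term by a Young/absorption argument, and then compare $\widetilde L_j(\theta_j^*)\le\widetilde L_j(\theta)$ before integrating in $x$. The paper carries this out by deriving explicit upper and lower bounds $(1-C\delta t)\,\mathbb E|\widehat{\mathcal V}_{t_j}-\mathcal U_{t_j}|^2+\tfrac{\nu\delta t}{2}\mathbb E|\widehat{\mathcal W}_{t_j}-\mathcal Z_{t_j}|^2\le \widetilde L_j^R+\widetilde L_j^I\le (2(1+L\delta t)^2+2L^2\delta t^2)\,\mathbb E|\widehat{\mathcal V}_{t_j}-\mathcal U_{t_j}|^2+\tfrac{\nu\delta t}{2}\mathbb E|\widehat{\mathcal W}_{t_j}-\mathcal Z_{t_j}|^2$ (the $\tfrac{\nu\delta t}{2}$ coefficient coming from the parallelogram law applied to the $\pm$ combinations in $F^R,F^I$), but this is exactly the bookkeeping you flagged as the main task.
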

\noindent
The following result measures the distance between $Y_{t_j}$ and $\widehat{\mathcal{U}}_{t_j}$.
\begin{lem}
\label{Convergence_prelim_result}
    There exists a constant $C>0$ depending on $K$, $L$, $T$, $\nu$ such that the following holds:
    \begin{align}
    &\max_{j=0,\ldots,M-1} \int_{\mathbb{R}}\Big[|Y_{t_j}-\widehat{\mathcal{U}}_{t_j}|^2 +\frac{(\delta t)\nu}{2}\Big(|Z_{t_j}-\widehat{\mathcal{Z}}_{t_j}|^2 \Big)\Big] \notag \\
    &\leq 
    \frac{C}{\delta t}\sum_{j=0}^{M-1}\Bigg[ \int_{\mathbb{R}}\Big[|\widehat{\mathcal{U}}_{t_j}-\widehat{\mathcal{V}}_{t_j}|^2 \Bigg]+ C\nu\sum_{j=0}^{M-1}\Bigg[ \int_{\mathbb{R}}\Big(|\widehat{\mathcal{W}}_{t_j}-\widehat{\mathcal{Z}}_{t_j}|^2 \Big)\Big]\Bigg]+C\left[\delta t+\rho(\delta t)\right].\label{lem1-est}
\end{align}
\end{lem}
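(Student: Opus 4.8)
The plan is to argue by backward induction on $j$ from $M$ down to $0$, working from the combined identity \eqref{1095}, in the spirit of Hur\'e, Pham and Warin \cite{hure2020deep} but adapted to the nonstandard integrals $*dW_s$. The base case is that the terminal gap $\int_{\mathbb{R}}\mathbb{E}|Y_{t_M}-\widehat{\mathcal{U}}_{t_M}|^2=0$ (and likewise for $Z$), since the forward Euler scheme is exact at grid points, $X_{t_j}=\mathcal{X}_{t_j}=x+\sqrt{\nu}W_{t_j}$, and $\widehat{\mathcal{U}}_{t_M}=G$, $\widehat{\mathcal{Z}}_{t_M}=\nabla G$; this provides the starting datum of the recursion. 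Throughout, the relevant norms are the space-integrated ones $\int_{\mathbb{R}}\mathbb{E}|\cdot|^2$, and the reason for integrating in $x$ is the identity $\int_{\mathbb{R}}\mathbb{E}[g(x+\sqrt{\nu}W_{t_j})]\,dx=\int_{\mathbb{R}}g$ for $g\in L^1(\mathbb{R})$, which makes quantities like $\int_{\mathbb{R}}\mathbb{E}|\partial^{\alpha}u(t_j,\mathcal{X}_{t_j})|^2\,dx$ bounded uniformly in $j$ by the $L^2$-norms of Assumption~(ii) and, crucially, makes certain sign-indefinite cross-terms in $Z$ disappear.

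\emph{One-step estimate.} From \eqref{1095} I would take the conditional expectation $\mathbb{E}_j$, which annihilates every summand carrying a factor $\delta W_{t_j}$ with an $\mathcal{F}_{t_j}$-measurable coefficient together with the mean-zero errors $\RN{1}_{A,j},\RN{2}_{A,j},\RN{2}_{B,j},\RN{4}_j$, and separately multiply \eqref{1095} by $\delta W_{t_j}/\delta t$ before conditioning to obtain the companion relation for the projected differences $\overline{Z}_{t_j}-\widehat{\mathcal{W}}_{t_j}$. Squaring, using Jensen on the $\mathbb{E}_j[\cdot]$ terms and Young's inequality with a weight $\sim\delta t$ to split cross-terms, the $f$-Lipschitz contribution becomes $C\delta t\int_{\mathbb{R}}\mathbb{E}|Y_{t_j}-\widehat{\mathcal{V}}_{t_j}|^2$ and is absorbed on the left for $\delta t$ small. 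The decisive algebraic point — the discrete counterpart of the energy cancellation displayed in the remark following Theorem~\ref{convergence_Y} — is that the $\delta W_{t_j}$-coefficients in the real and imaginary parts of \eqref{1095} are $\tfrac{\sqrt{\nu}}{2}(\zeta^R-\zeta^I)$ and $\tfrac{\sqrt{\nu}}{2}(\zeta^R+\zeta^I)$ with $\zeta:=\overline{Z}_{t_j}-\widehat{\mathcal{W}}_{t_j}$, so that adding the two squared identities and using $\mathbb{E}_j[(\delta W_{t_j})^2]=\delta t$ produces exactly $\tfrac{\nu\delta t}{2}|\zeta|^2$, the indefinite cross-terms $\zeta^R\zeta^I$ cancelling. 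Replacing $\widehat{\mathcal{V}}_{t_j},\widehat{\mathcal{W}}_{t_j}$ by $\widehat{\mathcal{U}}_{t_j},\widehat{\mathcal{Z}}_{t_j}$ and $\widehat{\mathcal{W}}_{t_{j+1}}$ by $\widehat{\mathcal{Z}}_{t_{j+1}}$ via triangle inequalities introduces the terms $\int_{\mathbb{R}}|\widehat{\mathcal{U}}_{t_k}-\widehat{\mathcal{V}}_{t_k}|^2$ and $\int_{\mathbb{R}}|\widehat{\mathcal{W}}_{t_k}-\widehat{\mathcal{Z}}_{t_k}|^2$ appearing on the right of \eqref{lem1-est} with the weights shown there. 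The net output is a recursion of the form $a_j+\tfrac{\nu\delta t}{2}b_j\le(1+C\delta t)a_{j+1}+C\,e_j$, with $a_j=\int_{\mathbb{R}}\mathbb{E}|Y_{t_j}-\widehat{\mathcal{U}}_{t_j}|^2$, $b_j$ the $Z$-difference term, and $e_j$ collecting the suitably weighted squared error terms and neural-network gaps.

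\emph{Error terms and conclusion.} For $\RN{3}_j$, Cauchy--Schwarz and Assumption~(iii) give $\tfrac1{\delta t}\sum_j\int_{\mathbb{R}}\mathbb{E}|\RN{3}_j|^2\le\sum_j\int_{t_j}^{t_{j+1}}\int_{\mathbb{R}}\mathbb{E}|f(s,\mathcal{X}_s,Y_s)-f(t_j,\mathcal{X}_{t_j},Y_{t_j})|^2\,ds\le C[\delta t+\rho(\delta t)]$, using $\mathbb{E}|\mathcal{X}_s-\mathcal{X}_{t_j}|^2\le\nu\delta t$, relation \eqref{reltn-intro-Sch} with the smoothness of $u$ from Assumption~(i), and Assumption~(ii). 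The mean-zero errors $\RN{1}_{A,j},\RN{2}_{A,j},\RN{2}_{B,j},\RN{4}_j$ leave the $Y$-recursion untouched and enter only the $Z$-estimate, contributing $O(\delta t)$ after summation — for $\RN{1}_{A,j}$ via $\mathrm{Var}((\delta W_{t_j})^2)=2(\delta t)^2$, for the others via the $L^2$-path regularity of $Z$. The genuinely new term is $\RN{1}_{B,j}$, the discretization error of the $*dW_s$-integral: here I would invoke Remark~\ref{rmk-divergence} to rewrite $\sqrt{\nu}\int_{t_j}^{t_{j+1}}(Z_s^R+Z_s^I)*dW_s=\tfrac{\nu}{2}\int_{t_j}^{t_{j+1}}G_s\,ds$ with $G_s:=(\partial_{xx}u^R+\partial_{xx}u^I)(s,\mathcal{X}_s)$, then apply It\^o's formula to $G_s$ and to $Z_s^R+Z_s^I$ and recombine, so that $\RN{1}_{B,j}$ splits into terms of size $O((\delta t)^{3/2})$ in $L^2(\Omega\times\mathbb{R})$; integrating in $x$ with the $L^2$-bounds on $\partial_{xxx}u,\partial_{xxxx}u,\partial_t\partial_{xx}u$ — which is exactly why Assumption~(i) lists these derivatives — yields $\int_{\mathbb{R}}\mathbb{E}|\RN{1}_{B,j}|^2\le C(\delta t)^3$, hence $\tfrac1{\delta t}\sum_j\int_{\mathbb{R}}\mathbb{E}|\RN{1}_{B,j}|^2\le C\delta t$. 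This last computation is the one relegated to the appendix. Summing the one-step recursion over $j$, applying the discrete Gronwall inequality with terminal gap $0$, and taking the maximum over $j$ then gives \eqref{lem1-est}.

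\emph{Main obstacle.} The delicate point is that $Z$ admits no estimate on its own: because of the $\pm Z^I$ structure of the $*dW$-coefficients — the Lipschitz-constant-$\ge 1$ obstruction discussed in Section~\ref{open_questions} — only the $\delta t$-weighted quantity $\delta t\,|Z_{t_j}-\widehat{\mathcal{Z}}_{t_j}|^2$ is controllable, and it survives only because the $Z^RZ^I$ cross-terms cancel after the spatial integration. Preserving that cancellation while simultaneously absorbing $\RN{1}_{B,j}$ — a term living at the level of second spatial derivatives of $u$ and encoding the discretization of a divergence-type integral — without losing a power of $\delta t$ is the technical heart of the argument, and is the reason the space-integrated norms, in place of the pointwise norms customary in the BSDE literature, must be used throughout.
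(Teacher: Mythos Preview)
Your plan captures all the essential ingredients — backward recursion from \eqref{1095}, the real-plus-imaginary cancellation of the $\zeta^R\zeta^I$ cross-terms, the $x$-integration that renders the $\mathcal{F}_{t_{j+1}}$-measurable products tractable, and the $O((\delta t)^3)$ bound on $\int_{\mathbb R}\mathbb{E}|\RN{1}_{B,j}|^2$ by rewriting the $*dW$-integral as a divergence and It\^o-expanding. These are precisely the ideas the paper uses, and your diagnosis of why spatial rather than pointwise norms are forced is exactly the content of Remark~\ref{rmk-numerical}.

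There is one structural difference. You propose to project \eqref{1095} onto $1$ and $\delta W_{t_j}/\delta t$, then square each projection. The paper instead squares \eqref{1095} as it stands and takes $\mathbb{E}_j\int_{\mathbb R}$: on the left the cross-terms with $\RN{1}_{A,j}+\RN{2}_{A,j}-\RN{4}_j$ vanish by orthogonality, and dropping their (positive) square gives the lower bound directly. Your route is equivalent in content — adding $|\mathbb{E}_j[\,\cdot\,]|^2+\tfrac{1}{\delta t}|\mathbb{E}_j[\,\cdot\,\delta W_{t_j}]|^2$ is Bessel's inequality for the span of $\{1,\delta W_{t_j}\}$ and recovers the same bound by $\mathbb{E}_j|\,\cdot\,|^2$ — but do not apply Jensen to each projection separately, as your wording suggests: bounding the two squares independently by $\mathbb{E}_j|\text{RHS}|^2$ costs a factor $2$ and breaks the $(1+C\delta t)$ recursion needed for Gr\"onwall.

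One genuine slip to fix: your one-step recursion $a_j+\tfrac{\nu\delta t}{2}b_j\le(1+C\delta t)a_{j+1}+Ce_j$ is missing a term. The right-hand side of \eqref{1095} carries $(Z_{t_{j+1}}-\widehat{\mathcal{Z}}_{t_{j+1}})\delta W_{t_j}$, which after squaring and $x$-integration contributes $\tfrac{\nu\delta t}{2}b_{j+1}$; the recursion must be on $c_j:=a_j+\tfrac{\nu\delta t}{2}b_j$ with $c_j\le(1+C\delta t)c_{j+1}+Ce_j$, else the $Z$-gap at $t_{j+1}$ is unaccounted for and the iteration does not close. Two minor corrections: the $\delta W_{t_j}$-coefficients on the left of \eqref{1095} involve $Z_{t_j}$, not $\overline{Z}_{t_j}$ (so $\zeta=Z_{t_j}-\widehat{\mathcal{W}}_{t_j}$); and no $\widehat{\mathcal{W}}_{t_{j+1}}$ appears in the one-step analysis — only $\widehat{\mathcal{Z}}_{t_{j+1}}$ sits on the right.
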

\begin{proof}
\textbf{Step 1.}   Notice that both sides of \eqref{lem1-est} are deterministic. First, we examine the left-hand side of the real part of \eqref{1095}. Using the $L^2$-projections \eqref{1042} and \eqref{L2_true_solution_img}, we arrive at
\begin{align}
&\mathbb{E}_j\Bigg(\Big(Y_{t_j}^R - \frac{\sqrt{\nu}}{2}(Z_{t_j}^R+Z_{t_j}^I)\delta W_{t_j}+\sqrt{\nu}Z_{t_j}^R\delta W_{t_j} \notag\\
&\hspace{100pt}- \Big[\widehat{\mathcal{V}}_{t_j}^R - \frac{\sqrt{\nu}}{2}(\widehat{\mathcal{W}}_{t_j}^R+\widehat{\mathcal{W}}_{t_j}^I)\delta W_{t_j}+\sqrt{\nu}\widehat{\mathcal{W}}_{t_j}^R\delta W_{t_j}\Big]\Big)+ (\RN{1}_{A,j}^R+\RN{2}_{A,j}^R-\RN{4}_j^R)\Bigg)^2\notag\\
    =&\mathbb{E}_j\left(Y_{t_j}^R - \frac{\sqrt{\nu}}{2}(Z_{t_j}^R+Z_{t_j}^I)\delta W_{t_j}+\sqrt{\nu}Z_{t_j}^R\delta W_{t_j}- \Big[\widehat{\mathcal{V}}_{t_j}^R - \frac{\sqrt{\nu}}{2}(\widehat{\mathcal{W}}_{t_j}^R+\widehat{\mathcal{W}}_{t_j}^I)\delta W_{t_j}+\sqrt{\nu}\widehat{\mathcal{W}}_{t_j}^R\delta W_{t_j}\Big]\right)^2\notag\\
    &+\mathbb{E}_j\left(\RN{1}_{A,j}^R+\RN{2}_{A,j}^R-\RN{4}_j^R\right)^2\notag\\
    \geq&\mathbb{E}_j\left(Y_{t_j}^R - \frac{\sqrt{\nu}}{2}(Z_{t_j}^R+Z_{t_j}^I)\delta W_{t_j}+\sqrt{\nu}Z_{t_j}^R\delta W_{t_j}- \Big[\widehat{\mathcal{V}}_{t_j}^R - \frac{\sqrt{\nu}}{2}(\widehat{\mathcal{W}}_{t_j}^R+\widehat{\mathcal{W}}_{t_j}^I)\delta W_{t_j}+\sqrt{\nu}\widehat{\mathcal{W}}_{t_j}^R\delta W_{t_j}\Big]\right)^2\notag\\
    =&\mathbb{E}_j\Bigg\{\Big[(Y_{t_j}^R-\widehat{\mathcal{V}}_{t_j}^R)+ \Big(-\frac{\sqrt{\nu}}{2}(Z_{t_j}^R+Z_{t_j}^I)\delta W_{t_j}+\frac{\sqrt{\nu}}{2}(\widehat{\mathcal{W}}_{t_j}^R+\widehat{\mathcal{W}}_{t_j}^I)\delta W_{t_j}\Big)\Big]^2\notag\\
&+\nu\Big(\widehat{\mathcal{W}}_{t_j}^R+\widehat{\mathcal{W}}_{t_j}^I - Z_{t_j}^R - Z_{t_j}^I\Big)(\delta W_{t_j})^2(Z_{t_j}^R -\widehat{\mathcal{W}}_{t_j}^R)+\nu(Z_{t_j}^R-\widehat{\mathcal{W}}_{t_j}^R)^2(\delta W_{t_j})^2\Bigg\}.
\label{1109}
\end{align}
Meanwhile, with similar calculation the left-hand side of the imaginary part yields
\begin{align}
    &\mathbb{E}_j\Bigg(Y_{t_j}^I + \frac{\sqrt{\nu}}{2}(Z_{t_j}^R-Z_{t_j}^I)\delta W_{t_j}+\sqrt{\nu}Z_{t_j}^I\delta W_{t_j}\notag \\
    &\hspace{100pt}- \Big[\widehat{\mathcal{V}}_{t_j}^I + \frac{\sqrt{\nu}}{2}(\widehat{\mathcal{W}}_{t_j}^R-\widehat{\mathcal{W}}_{t_j}^I)\delta W_{t_j}+\sqrt{\nu}\widehat{\mathcal{W}}_{t_j}^I\delta W_{t_j}\Big]+\Big(\RN{1}_{A,j}^I+\RN{2}_{A,j}^I-\RN{4}_j^I\Big)\Bigg)^2\notag\\
    &\geq \mathbb{E}_j\Bigg\{\Big[(Y_{t_j}^I-\widehat{\mathcal{V}}_{t_j}^I)+ \Big(\frac{\sqrt{\nu}}{2}(Z_{t_j}^R-Z_{t_j}^I)\delta W_{t_j}-\frac{\sqrt{\nu}}{2}(\widehat{\mathcal{W}}_{t_j}^R-\widehat{\mathcal{W}}_{t_j}^I)\delta W_{t_j}\Big)\Big]^2\notag\\
&\quad +\nu\Big(-\widehat{\mathcal{W}}_{t_j}^R+\widehat{\mathcal{W}}_{t_j}^I + Z_{t_j}^R - Z_{t_j}^I\Big)(\delta W_{t_j})^2(Z_{t_j}^I -\widehat{\mathcal{W}}_{t_j}^I)+\nu(Z_{t_j}^I-\widehat{\mathcal{W}}_{t_j}^I)^2(\delta W_{t_j})^2\Bigg\}.
\label{1119}
\end{align}
Then adding \eqref{1109} and \eqref{1119} leads to
\begin{align*}
    &\mathbb{E}_j\Bigg(\Big(Y_{t_j}^R - \frac{\sqrt{\nu}}{2}(Z_{t_j}^R+Z_{t_j}^I)\delta W_{t_j}+\sqrt{\nu}Z_{t_j}^R\delta W_{t_j}\\
    &\hspace{100pt}- \Big[\widehat{\mathcal{V}}_{t_j}^R - \frac{\sqrt{\nu}}{2}(\widehat{\mathcal{W}}_{t_j}^R+\widehat{\mathcal{W}}_{t_j}^I)\delta W_{t_j}+\sqrt{\nu}\widehat{\mathcal{W}}_{t_j}^R\delta W_{t_j}\Big]\Big)+(\RN{1}_{A,j}^R+\RN{2}_{A,j}^R-\RN{4}_j^R)\Bigg)^2\\
    +&\mathbb{E}_j\Bigg(\Big(Y_{t_j}^I + \frac{\sqrt{\nu}}{2}(Z_{t_j}^R-Z_{t_j}^I)\delta W_{t_j}+\sqrt{\nu}Z_{t_j}^I\delta W_{t_j}\\
    &\hspace{100pt}- \Big[\widehat{\mathcal{V}}_{t_j}^I + \frac{\sqrt{\nu}}{2}(\widehat{\mathcal{W}}_{t_j}^R-\widehat{\mathcal{W}}_{t_j}^I)\delta W_{t_j}+\sqrt{\nu}\widehat{\mathcal{W}}_{t_j}^I\delta W_{t_j}\Big]\Big)+(\RN{1}_{A,j}^I+\RN{2}_{A,j}^I -\RN{4}_j^I)\Bigg)^2\\
    \geq&\mathbb{E}_j\Big[(Y_{t_j}^R-\widehat{\mathcal{V}}_{t_j}^R)+ \Big(-\frac{\sqrt{\nu}}{2}(Z_{t_j}^R+Z_{t_j}^I)\delta W_{t_j}+\frac{\sqrt{\nu}}{2}(\widehat{\mathcal{W}}_{t_j}^R+\widehat{\mathcal{W}}_{t_j}^I)\delta W_{t_j}\Big)\Big]^2\\
    &+\mathbb{E}_j\Big[(Y_{t_j}^I-\widehat{\mathcal{V}}_{t_j}^I)+ \Big(\frac{\sqrt{\nu}}{2}(Z_{t_j}^R-Z_{t_j}^I)\delta W_{t_j}-\frac{\sqrt{\nu}}{2}(\widehat{\mathcal{W}}_{t_j}^R-\widehat{\mathcal{W}}_{t_j}^I)\delta W_{t_j}\Big)\Big]^2.
\end{align*}
Finally, integrating both sides under the Lebesgue measure yields that
\begin{align*}
    &\mathbb{E}_j\int_{\mathbb{R}}\Big[(Y_{t_j}^R-\widehat{\mathcal{V}}_{t_j}^R)+ \Big(-\frac{\sqrt{\nu}}{2}(Z_{t_j}^R+Z_{t_j}^I)\delta W_{t_j}+\frac{\sqrt{\nu}}{2}(\widehat{\mathcal{W}}_{t_j}^R+\widehat{\mathcal{W}}_{t_j}^I)\delta W_{t_j}\Big)\Big]^2\\
    &+\mathbb{E}_j\int_{\mathbb{R}}\Big[(Y_{t_j}^I-\widehat{\mathcal{V}}_{t_j}^I)+ \Big(\frac{\sqrt{\nu}}{2}(Z_{t_j}^R-Z_{t_j}^I)\delta W_{t_j}-\frac{\sqrt{\nu}}{2}(\widehat{\mathcal{W}}_{t_j}^R-\widehat{\mathcal{W}}_{t_j}^I)\delta W_{t_j}\Big)\Big]^2\\
    =&\mathbb{E}_j\int_{\mathbb{R}}\Big[(Y_{t_j}^R-\widehat{\mathcal{V}}_{t_j}^R)^2+ \frac{(\delta t)\nu}{4}\Big((Z_{t_j}^R+Z_{t_j}^I)-(\widehat{\mathcal{W}}_{t_j}^R+\widehat{\mathcal{W}}_{t_j}^I)\Big)^2\Big]\\
    &+\mathbb{E}_j\int_{\mathbb{R}}\Big[(Y_{t_j}^I-\widehat{\mathcal{V}}_{t_j}^I)^2+ \frac{(\delta t)\nu}{4}\Big((Z_{t_j}^R-Z_{t_j}^I)-(\widehat{\mathcal{W}}_{t_j}^R-\widehat{\mathcal{W}}_{t_j}^I)\Big)^2\Big].
\end{align*}
\\[4pt]
\textbf{Step 2.} 
We examine the right-hand side of \eqref{1095}. Using the inequality 
$ (a + b)^2 \leq (1 + \gamma \delta t) a^2 + \left(1 + \frac{1}{\gamma \delta t}\right) b^2 $, 
for some constant $ \gamma > 0 $, we obtain
\begin{align*}
&\mathbb{E}_j\int_{\mathbb{R}} |\textbf{RHS}|^2
\\
&:=\mathbb{E}_j\int_{\mathbb{R}}\Bigg(Y_{t_{j+1}}^R - \frac{\sqrt{\nu}}{2}(Z_{t_{j+1}}^R+Z_{t_{j+1}}^I)\delta W_{t_j} -\RN{1}_{B,j}^R-\RN{2}_{B,j}^R-\RN{3}_j^R - f^I(t_j,\mathcal X_{t_j},Y_{t_j}^R,Y_{t_j}^I)\delta t 
\\
        &\quad
        -\Big[\widehat{\mathcal{U}}_{t_{j+1}}^R - \frac{\sqrt{\nu}}{2}(\widehat{\mathcal{Z}}_{t_{j+1}}^R+\widehat{\mathcal{Z}}_{t_{j+1}}^I)\delta W_{t_j} -f^I(t_j, X_{t_j},\widehat{\mathcal{V}}_{t_j}^R,\widehat{\mathcal{V}}_{t_j}^I)\delta t\Big]\Bigg)^2\\
        &
        \leq (1+\gamma \delta t)\mathbb{E}_j\int_{\mathbb{R}}\Bigg(\Big[Y_{t_{j+1}}^R -\widehat{\mathcal{U}}_{t_{j+1}}^R\Big]+ \frac{\sqrt{\nu}}{2}\delta W_{t_j}\Big[(\widehat{\mathcal{Z}}_{t_{j+1}}^R+\widehat{\mathcal{Z}}_{t_{j+1}}^I)-(Z_{t_{j+1}}^R+Z_{t_{j+1}}^I)\Big]\Bigg)^2 
        \\
        &\quad
        +4\Big(1+\frac{1}{\gamma \delta t}\Big)\mathbb{E}_j\int_{\mathbb{R}}\Big((\RN{1}_{B,j}^R)^2+(\RN{2}_{B,j}^R)^2+(\RN{3}_j^R)^2 +\big(f^I(t_j,\mathcal X_{t_j},Y_{t_j}^R,Y_{t_j}^I)\delta t-f^I(t_j, X_{t_j},\widehat{\mathcal{V}}_{t_j}^R,\widehat{\mathcal{V}}_{t_j}^I)\delta t\big)^2\Big).
        \end{align*}
 Further, in view of the Lipschitz-continuity of function $f$ in condition (iii), we have
        \begin{align*}
        &\mathbb{E}_j\int_{\mathbb{R}} |\textbf{RHS}|^2
\\  
        &
        \leq (1+\gamma \delta t)\mathbb{E}_j\int_{\mathbb{R}}\Bigg(\Big[Y_{t_{j+1}}^R -\widehat{\mathcal{U}}_{t_{j+1}}^R\Big]+ \frac{\sqrt{\nu}}{2}\delta W_{t_j}\Big[(\widehat{\mathcal{Z}}_{t_{j+1}}^R+\widehat{\mathcal{Z}}_{t_{j+1}}^I)-(Z_{t_{j+1}}^R+Z_{t_{j+1}}^I)\Big]\Bigg)^2 \\
        &\quad
        +\frac{8}{\gamma}\Big(\gamma \delta t+1\Big)L^2(\delta t)\Big(\mathbb{E}_j\int_{\mathbb{R}}|Y_{t_j}^R-\widehat{\mathcal{V}}_{t_j}^R|^2+\mathbb{E}_j\int_{\mathbb{R}}|Y_{t_j}^I-\widehat{\mathcal{V}}_{t_j}^I|^2\Big)\Big)\\
        &\quad
        +4\Big(1+\frac{1}{\gamma \delta t}\Big)\mathbb{E}_j\int_{\mathbb{R}}\Big((\RN{1}_{B,j}^R)^2+(\RN{2}_{B,j}^R)^2+(\RN{3}_j^R)^2\Big)\\
        & = (1+\gamma \delta t)\Bigg(\mathbb{E}_j\int_{\mathbb{R}}\Big[Y_{t_{j+1}}^R -\widehat{\mathcal{U}}_{t_{j+1}}^R\Big]^2+ \frac{(\delta t)\nu}{4}\Big[(\widehat{\mathcal{Z}}_{t_{j+1}}^R+\widehat{\mathcal{Z}}_{t_{j+1}}^I)-(Z_{t_{j+1}}^R+Z_{t_{j+1}}^I)\Big]^2\Bigg) \\
        &\quad
        +\frac{8}{\gamma}\Big(\gamma \delta t+1\Big)L^2(\delta t)\Big(\mathbb{E}_j\int_{\mathbb{R}}|Y_{t_j}^R-\widehat{\mathcal{V}}_{t_j}^R|^2+\mathbb{E}_j\int_{\mathbb{R}}|Y_{t_j}^I-\widehat{\mathcal{V}}_{t_j}^I|^2\Big)\Big)\\
        &\quad
        +4\Big(1+\frac{1}{\gamma \delta t}\Big)\mathbb{E}_j\int_{\mathbb{R}}\Big((\RN{1}_{B,j}^R)^2+(\RN{2}_{B,j}^R)^2+(\RN{3}_j^R)^2\Big),
\end{align*}
where in the last equality we have used the facts
\begin{align}
    &\mathbb{E}_j \int_{\mathbb{R}}\Big[Y_{t_{j+1}}^R-\widehat{\mathcal{U}}_{t_{j+1}}^R\Big]\delta W_{t_j}\Big[(\widehat{\mathcal{Z}}_{t_{j+1}}^R+\widehat{\mathcal{Z}}_{t_{j+1}}^I)-(Z_{t_{j+1}}^R+Z_{t_{j+1}}^I)\Big]\notag\\
    &= \mathbb{E}_j\delta W_{t_j}\int_{\mathbb{R}}\Big[Y_{t_{j+1}}^R-\widehat{\mathcal{U}}_{t_{j+1}}^R\Big]\Big[(\widehat{\mathcal{Z}}_{t_{j+1}}^R+\widehat{\mathcal{Z}}_{t_{j+1}}^I)-(Z_{t_{j+1}}^R+Z_{t_{j+1}}^I)\Big]\notag\\
    &=0,\label{demonstrate_leb_x}
\end{align}
and analogously,
\begin{align*}
    &\mathbb{E}_j\int_{\mathbb{R}} (\delta W_{t_j})^2 \Big|(\widehat{\mathcal{Z}}_{t_{j+1}}^R+\widehat{\mathcal{Z}}_{t_{j+1}}^I)-(Z_{t_{j+1}}^R+Z_{t_{j+1}}^I)\Big|^2
    =\delta t  \mathbb{E}_j\int_{\mathbb{R}} 
    \Big|(\widehat{\mathcal{Z}}_{t_{j+1}}^R+\widehat{\mathcal{Z}}_{t_{j+1}}^I)-(Z_{t_{j+1}}^R+Z_{t_{j+1}}^I)\Big|^2.
\end{align*}
\\[4pt]
\textbf{Step 3.} We combine the separate estimates for the left-hand and right-hand sides of \eqref{1095}. Let $\gamma=L+1$. Together with the imaginary part, for $\delta t$ small enough, and a constant $C$ which is possibly changing line by line, we have 
\begin{align*}
&\mathbb{E}_j\int_{\mathbb{R}}\Big[(Y_{t_j}^R-\widehat{\mathcal{V}}_{t_j}^R)^2+ \frac{(\delta t)\nu}{4}\Big((Z_{t_j}^R+Z_{t_j}^I)-(\widehat{\mathcal{W}}_{t_j}^R+\widehat{\mathcal{W}}_{t_j}^I)\Big)^2\Big]\\    &+\mathbb{E}_j\int_{\mathbb{R}}\Big[(Y_{t_j}^I-\widehat{\mathcal{V}}_{t_j}^I)^2+ \frac{(\delta t)\nu}{4}\Big((Z_{t_j}^R-Z_{t_j}^I)-(\widehat{\mathcal{W}}_{t_j}^R-\widehat{\mathcal{W}}_{t_j}^I)\Big)^2\Big]\\
    \leq&(1+C\delta t)\Bigg(\mathbb{E}_j\int_{\mathbb{R}}\Big[Y_{t_{j+1}}^R -\widehat{\mathcal{U}}_{t_{j+1}}^R\Big]^2+ \frac{(\delta t)\nu}{4}\Big[(\widehat{\mathcal{Z}}_{t_{j+1}}^R+\widehat{\mathcal{Z}}_{t_{j+1}}^I)-(Z_{t_{j+1}}^R+Z_{t_{j+1}}^I)\Big]^2\Bigg) \\
    &+(1+C\delta t)\Bigg(\mathbb{E}_j\int_{\mathbb{R}}\Big[Y_{t_{j+1}}^I -\widehat{\mathcal{U}}_{t_{j+1}}^I\Big]^2+ \frac{(\delta t)\nu}{4}\Big[(\widehat{\mathcal{Z}}_{t_{j+1}}^R-\widehat{\mathcal{Z}}_{t_{j+1}}^I)-(Z_{t_{j+1}}^R-Z_{t_{j+1}}^I)\Big]^2\Bigg) \\
        &+8\Big(1+\frac{1}{\gamma \delta t}\Big)\mathbb{E}_j\int_{\mathbb{R}}(\RN{1}_{B,j}^R)^2+(\RN{2}_{B,j}^R)^2+(\RN{3}_j^R)^2+(\RN{1}_{B,j}^I)^2+(\RN{2}_{B,j}^I)^2+(\RN{3}_j^I)^2.
\end{align*}
Using the inequality $(a+b)^2\geq (1-\delta t)a^2 - \frac{1}{\delta t}b^2$, we obtain
\begin{align*}
    &\mathbb{E}_j\int_{\mathbb{R}}\Big[(Y_{t_j}^R-\widehat{\mathcal{V}}_{t_j}^R)^2+ \frac{(\delta t)\nu}{4}\Big((Z_{t_j}^R+Z_{t_j}^I)-(\widehat{\mathcal{W}}_{t_j}^R+\widehat{\mathcal{W}}_{t_j}^I)\Big)^2\Big]\\    &+\mathbb{E}_j\int_{\mathbb{R}}\Big[(Y_{t_j}^I-\widehat{\mathcal{V}}_{t_j}^I)^2+ \frac{(\delta t)\nu}{4}\Big((Z_{t_j}^R-Z_{t_j}^I)-(\widehat{\mathcal{W}}_{t_j}^R-\widehat{\mathcal{W}}_{t_j}^I)\Big)^2\Big]\\
    \geq &(1-\delta t)\Bigg\{\mathbb{E}_j\int_{\mathbb{R}}\Big[(Y_{t_j}^R-\widehat{\mathcal{U}}_{t_j}^R)^2+ \frac{(\delta t)\nu}{4}\Big((Z_{t_j}^R+Z_{t_j}^I)-(\widehat{\mathcal{Z}}_{t_j}^R+\widehat{\mathcal{Z}}_{t_j}^I)\Big)^2\Big]\\    &+\mathbb{E}_j\int_{\mathbb{R}}\Big[(Y_{t_j}^I-\widehat{\mathcal{U}}_{t_j}^I)^2+ \frac{(\delta t)\nu}{4}\Big((Z_{t_j}^R-Z_{t_j}^I)-(\widehat{\mathcal{Z}}_{t_j}^R-\widehat{\mathcal{Z}}_{t_j}^I)\Big)^2\Big]dx\Bigg\}\\
    &-\frac{1}{\delta t}\Bigg\{\mathbb{E}_j\int_{\mathbb{R}}\Big[(\widehat{\mathcal{U}}_{t_j}^R-\widehat{\mathcal{V}}_{t_j}^R)^2+ \frac{(\delta t)\nu}{4}\Big((\widehat{\mathcal{W}}_{t_j}^R+\widehat{\mathcal{W}}_{t_j}^I)-(\widehat{\mathcal{Z}}_{t_j}^R+\widehat{\mathcal{Z}}_{t_j}^I)\Big)^2\Big]\\    &+\mathbb{E}_j\int_{\mathbb{R}}\Big[(\widehat{\mathcal{U}}_{t_j}^I-\widehat{\mathcal{V}}_{t_j}^I)^2+ \frac{(\delta t)\nu}{4}\Big((\widehat{\mathcal{W}}_{t_j}^R-\widehat{\mathcal{W}}_{t_j}^I)-(\widehat{\mathcal{Z}}_{t_j}^R-\widehat{\mathcal{Z}}_{t_j}^I)\Big)^2\Big]\Bigg\}.
\end{align*}
Thus for $\delta t$ small enough, it follows that
\begin{align*}
&\mathbb{E}_j\int_{\mathbb{R}}\Big[(Y_{t_j}^R-\widehat{\mathcal{U}}_{t_j}^R)^2+ \frac{(\delta t)\nu}{4}\Big((Z_{t_j}^R+Z_{t_j}^I)-(\widehat{\mathcal{Z}}_{t_j}^R+\widehat{\mathcal{Z}}_{t_j}^I)\Big)^2\Big]\\    &+\mathbb{E}_j\int_{\mathbb{R}}\Big[(Y_{t_j}^I-\widehat{\mathcal{U}}_{t_j}^I)^2+ \frac{(\delta t)\nu}{4}\Big((Z_{t_j}^R-Z_{t_j}^I)-(\widehat{\mathcal{Z}}_{t_j}^R-\widehat{\mathcal{Z}}_{t_j}^I)\Big)^2\Big]\\
    \leq&(1+C\delta t)\Bigg(\mathbb{E}_j\int_{\mathbb{R}}\Big[Y_{t_{j+1}}^R -\widehat{\mathcal{U}}_{t_{j+1}}^R\Big]^2+ \frac{(\delta t)\nu}{4}\Big[(\widehat{\mathcal{Z}}_{t_{j+1}}^R+\widehat{\mathcal{Z}}_{t_{j+1}}^I)-(Z_{t_{j+1}}^R+Z_{t_{j+1}}^I)\Big]^2\Bigg) \\
    &+(1+C\delta t)\Bigg(\mathbb{E}_j\int_{\mathbb{R}}\Big[Y_{t_{j+1}}^I -\widehat{\mathcal{U}}_{t_{j+1}}^I\Big]^2+ \frac{(\delta t)\nu}{4}\Big[(\widehat{\mathcal{Z}}_{t_{j+1}}^R-\widehat{\mathcal{Z}}_{t_{j+1}}^I)-(Z_{t_{j+1}}^R-Z_{t_{j+1}}^I)\Big]^2\Bigg) \\
        &+C\Big(1+\frac{1}{\gamma \delta t}\Big)\mathbb{E}_j\int_{\mathbb{R}}(\RN{1}_{B,j}^R)^2+(\RN{2}_{B,j}^R)^2+(\RN{3}_j^R)^2+(\RN{1}_{B,j}^I)^2+(\RN{2}_{B,j}^I)^2+(\RN{3}_j^I)^2\\
        &+\frac{C}{\delta t}\Bigg\{\mathbb{E}_j\int_{\mathbb{R}}\Big[(\widehat{\mathcal{U}}_{t_j}^R-\widehat{\mathcal{V}}_{t_j}^R)^2+ \frac{(\delta t)\nu}{4}\Big((\widehat{\mathcal{W}}_{t_j}^R+\widehat{\mathcal{W}}_{t_j}^I)-(\widehat{\mathcal{Z}}_{t_j}^R+\widehat{\mathcal{Z}}_{t_j}^I)\Big)^2\Big]dx\\    &+\mathbb{E}_j\int_{\mathbb{R}}\Big[(\widehat{\mathcal{U}}_{t_j}^I-\widehat{\mathcal{V}}_{t_j}^I)^2+ \frac{(\delta t)\nu}{4}\Big((\widehat{\mathcal{W}}_{t_j}^R-\widehat{\mathcal{W}}_{t_j}^I)-(\widehat{\mathcal{Z}}_{t_j}^R-\widehat{\mathcal{Z}}_{t_j}^I)\Big)^2\Big]\Bigg\}.
\end{align*}
Further simplification gives us 
\begin{align*}
&\mathbb{E}_j\int_{\mathbb{R}}\Big[|Y_{t_j}-\widehat{\mathcal{U}}_{t_j}|^2  +\frac{(\delta t)\nu}{2} |Z_{t_j}-\widehat{\mathcal{Z}}_{t_j}|^2 \Big]\\
\leq&(1+C\delta t)
\Bigg(\mathbb{E}_j\int_{\mathbb{R}}\Big|Y_{t_{j+1}} -\widehat{\mathcal{U}}_{t_{j+1}}\Big|^2
 + \frac{(\delta t)\nu}{2} |\widehat{\mathcal{Z}}_{t_{j+1}}-Z_{t_{j+1}}|^2 \Bigg) 
 +\frac{C}{\delta t}\mathbb{E}_j\int_{\mathbb{R}}\Big[ 
        |\widehat{\mathcal{U}}_{t_j}-\widehat{\mathcal{V}}_{t_j}|^2
        + \frac{(\delta t)\nu}{2} |\widehat{\mathcal{W}}_{t_j}-\widehat{\mathcal{Z}}_{t_j}|^2 \Big]
 \\
        &+16\Big(1+\frac{1}{\gamma \delta t}\Big)\mathbb{E}_j\int_{\mathbb{R}}(\RN{1}_{B,j}^R)^2+(\RN{2}_{B,j}^R)^2+(\RN{3}_j^R)^2+(\RN{1}_{B,j}^I)^2+(\RN{2}_{B,j}^I)^2+(\RN{3}_j^I)^2.
\end{align*}
Recalling $\mathcal{X}_T=X_T$, taking expectations, and using the discrete Gr\"onwall's inequality, we obtain
\begin{align}
    &\max_{j=0,\ldots,M-1}\int_{\mathbb{R}}\Big[|Y_{t_j}-\widehat{\mathcal{U}}_{t_j}|^2
     +\frac{(\delta t)\nu}{2}|Z_{t_j}-\widehat{\mathcal{Z}}_{t_j}|^2 \Big]
     \notag\\ 
    &
    \leq \frac{C}{\delta t}\sum_{j=0}^{M-1}
    \int_{\mathbb{R}}|\widehat{\mathcal{U}}_{t_j}-\widehat{\mathcal{V}}_{t_j}|^2 + C\nu\sum_{j=0}^{M-1}
     \int_{\mathbb{R}}
     |\widehat{\mathcal{W}}_{t_j}-\widehat{\mathcal{Z}}_{t_j}|^2
      \notag\\    
    &\quad
    +\frac{C}{\delta t}\sum_{j=0}^{M-1} 
    \mathbb{E}\int_{\mathbb{R}}
    \left(
|\RN{1}_{B,j}^R|^2+|\RN{2}_{B,j}^R|^2+|\RN{3}_j^R|^2+|\RN{1}_{B,j}^I|^2+|\RN{2}_{B,j}^I|^2+|\RN{3}_j^I|^2\right).
\label{eq-roman-est}
\end{align}
Then the proof may be completed by appropriately estimating the Roman numeral error terms. To avoid cumbersome arguments, the estimate is postponed to Appendix \ref{appendix:1}.
\end{proof}

\begin{rmk}\label{rmk-numerical}
    The proofs of Lemma \ref{Convergence_prelim_result} and Lemma \ref{minimization_neural_network} are inspired by, but distinct from, the work of Huré, Pham, and Warin in \cite{hure2020deep}.   The integrals with $*dW_t$ in BSDE \eqref{BSDE-Sch}, in each time step $[t_j,t_{j+1}]$, generate terms involving the backward processes such as $Z_{t_{j+1}}$, $\widehat{\mathcal{Z}}_{t_{j+1}}$ at time $t_{j+1}$, which are multiplied by the increment of the Brownian motion (see terms of the form $\mathbb{E}_j[(\widehat{\mathcal{Z}}_{t_{j+1}}^R+\widehat{\mathcal{Z}}_{t_{j+1}}^I)\delta W_{t_j}]$ in \eqref{true_discrete} and its counterpart in \eqref{true_Y_solution}). This is in contrast to \cite[Equation 4.5]{hure2020deep}, where no such terms appear. The difficulty arises in the estimation of these terms due to the randomness $x+\sqrt{\nu}W_{t_{j+1}}$ introduced in $Z_{t_{j+1}}$, $\widehat{\mathcal Z}_{t_{j+1}}$ which are not $\sF_{t_j}$-measurable. However, by taking integration over $x$ under the Lebesgue measure, and applying Fubini’s Theorem, we can recover key properties such as $\int_{\mathbb{R}^d}\mathbb{E}_j[(Y_{t_{j+1}}^R-\widehat{\mathcal{U}}_{t_{j+1}}^R)(\widehat{\mathcal{Z}}_{t_{j+1}}^R+\widehat{\mathcal{Z}}_{t_{j+1}}^I)\delta W_{t_j}] = 0$ and $\int_{\mathbb{R}^d}\mathbb{E}_j[(Y_{t_{j+1}}^R-\widehat{\mathcal{U}}_{t_{j+1}}^R)(\widehat{\mathcal{Z}}_{t_{j+1}}^R+\widehat{\mathcal{Z}}_{t_{j+1}}^I)(\delta W_{t_j})^2] 
    = (\delta t)\int_{\mathbb{R}} (Y_{t_{j+1}}^R-\widehat{\mathcal{U}}_{t_{j+1}}^R)(\widehat{\mathcal{Z}}_{t_{j+1}}^R+\widehat{\mathcal{Z}}_{t_{j+1}}^I)$, which are crucial for our analysis. Another significant challenge in our problem arises from the structure of the complex-valued Schr\"odinger equation. This requires us to carefully split certain terms, such as $\RN{1}_{A,j}^R$ and $\RN{1}_{B,j}^R$, allowing us to carry out error calculations through cancellations, measurability arguments, and conditional expectations.
\end{rmk}

\section{Appendix}

\subsection{Proof of Lemma \ref{minimization_neural_network}}
\label{appendix:0}
    \begin{proof}[Proof of Lemma \ref{minimization_neural_network}]
    Recalling equation \eqref{discrete_U_V}, the $L^2$ projection \eqref{1042},  and the real part of cost functional $L_j^R(\theta)$ in Algorithm \ref{alg:cap}, for all neural network parameters $\theta = (\xi,\eta)$ we have
    \begin{align*}
L_j^R(\theta) =& \mathbb{E}\Big|\widehat{\mathcal{V}}_{t_j}^R-\mathcal{U}_{t_j}^R(X_{t_j};\xi)+f^I(t_j,X_{t_j},\widehat{\mathcal{V}}_{t_j}^R,\widehat{\mathcal{V}}_{t_j}^I)\delta t-f^I(t_j,X_{t_j},\mathcal{U}_{t_j}^R(X_{t_j};\xi),\mathcal{U}_{t_j}^I(X_{t_j};\xi))\delta t\\
&+\frac{\sqrt{\nu}}{2}\big(\widehat{\mathcal{W}}_{t_j}^R-\mathcal{Z}_{t_j}^R(X_{t_j};\eta)\big)\delta W_{t_j}-\frac{\sqrt{\nu}}{2}\big(\widehat{\mathcal{W}}_{t_j}^I-\mathcal{Z}_{t_j}^I(X_{t_j};\eta)\big)\delta W_{t_j}\Big|^2 +\nu\mathbb{E}\int_{t_j}^{t_{j+1}}|\widehat{Z}_s^R-\widehat{\mathcal{W}}_{t_j}^R|^2ds\\
=& \mathbb{E}\Big|\widehat{\mathcal{V}}_{t_j}^R-\mathcal{U}_{t_j}^R(X_{t_j};\xi)+f^I(t_j,X_{t_j},\widehat{\mathcal{V}}_{t_j}^R,\widehat{\mathcal{V}}_{t_j}^I)\delta t-f^I(t_j,X_{t_j},\mathcal{U}_{t_j}^R(X_{t_j};\xi),\mathcal{U}_{t_j}^I(X_{t_j};\xi))\delta t\Big|^2\\
&+\mathbb{E}\Big|\frac{\sqrt{\nu}}{2}\big(\widehat{\mathcal{W}}_{t_j}^R-\mathcal{Z}_{t_j}^R(X_{t_j};\eta)\big)\delta W_{t_j}-\frac{\sqrt{\nu}}{2}\big(\widehat{\mathcal{W}}_{t_j}^I-\mathcal{Z}_{t_j}^I(X_{t_j};\eta)\big)\delta W_{t_j}\Big|^2 +\nu\mathbb{E}\int_{t_j}^{t_{j+1}}|\widehat{Z}_s^R-\widehat{\mathcal{W}}_{t_j}^R|^2ds\\
=:& \widetilde{L}_j^R(\theta)+\nu\mathbb{E}\int_{t_j}^{t_{j+1}}|\widehat{Z}_s^R-\widehat{\mathcal{W}}_{t_j}^R|^2ds.
\end{align*}
    Note that the last integral term is a constant independent of the parameters, hence the minimization only involves $\widetilde{L}_j^R$. Similarly we derive the $\widetilde{L}_j^I$ of the imaginary part to be
    \begin{align*}
        \widetilde{L}_j^I(\theta):=&\mathbb{E}\Big|\widehat{\mathcal{V}}_{t_j}^I-\mathcal{U}_{t_j}^I(X_{t_j};\xi)-f^R(t_j,X_{t_j},\widehat{\mathcal{V}}_{t_j}^R,\widehat{\mathcal{V}}_{t_j}^I)\delta t+f^R(t_j,X_{t_j},\mathcal{U}_{t_j}^R(X_{t_j};\xi),\mathcal{U}_{t_j}^I(X_{t_j};\xi))\delta t\Big|^2\\
&+\mathbb{E}\Big|\frac{\sqrt{\nu}}{2}\big(\widehat{\mathcal{W}}_{t_j}^R-\mathcal{Z}_{t_j}^R(X_{t_j};\eta)\big)\delta W_{t_j}+\frac{\sqrt{\nu}}{2}\big(\widehat{\mathcal{W}}_{t_j}^I-\mathcal{Z}_{t_j}^I(X_{t_j};\eta)\big)\delta W_{t_j}\Big|^2.
    \end{align*}
    Adding $\widetilde{L}_j^R$ and $\widetilde{L}_j^I$ together and using the Lipschitz property of $f$ and the parallelogram law yield
    \begin{align*}
        \widetilde{L}_j^R(\theta)+\widetilde{L}_j^I(\theta) \leq& \left(2(1+L\delta t)^2 + 2 L^2(\delta t)^2\right)
        |\widehat{\mathcal{V}}_{t_j}-\mathcal{U}_{t_j}(X_{t_j};\xi)|^2  
        +\frac{\nu(\delta t)}{2}\mathbb{E} |\widehat{\mathcal{W}}_{t_j}-\mathcal{Z}_{t_j}(X_{t_j};\eta)|^2 .
    \end{align*}
    Using the inequality $(a+b)^2\geq (1-\delta t)a^2 - \frac{1}{\delta t}b^2$, we derive that
    \begin{align*}
        \widetilde{L}_j^R(\theta)+\widetilde{L}_j^I(\theta)\geq &(1-C\delta t)\Big[\mathbb{E}|\widehat{\mathcal{V}}_{t_j}-\mathcal{U}_{t_j}(X_{t_j};\xi)|^2 
        \Big]
        +\frac{\nu\delta t}{2}\mathbb{E}\Big[|\widehat{\mathcal{W}}_{t_j}-\mathcal{Z}_{t_j}(X_{t_j};\eta)|^2\Big].
    \end{align*}
    Thus taking $\theta^*_j=(\xi^*_j,\eta^*_j) \in \arg \min _{\theta \in \mathbb{R}^{N_m}} L_j(\theta)$ such that $\widehat{\mathcal{U}}_{t_j}^R = \mathcal{U}_{t_j}^R(X_{t_j};\xi^*_j)$, $\widehat{\mathcal{U}}_{t_j}^I = \mathcal{U}_{t_j}^I(X_{t_j};\xi^*_j)$, $\widehat{\mathcal{Z}}_{t_j}^R = \mathcal{Z}_{t_j}^R(X_{t_j};\eta^*_j)$ and $\widehat{\mathcal{Z}}_{t_j}^I = \mathcal{Z}_{t_j}^I(X_{t_j};\eta^*_j)$,  we have
    \begin{align*}
(1-C\delta t) 
\mathbb{E}|\widehat{\mathcal{V}}_{t_j}-\widehat{\mathcal{U}}_{t_j}|^2   
        &+\frac{\nu\delta t}{2}\mathbb{E} |\widehat{\mathcal{W}}_{t_j}-\widehat{\mathcal{Z}}_{t_j}|^2
        \leq \widetilde{L}_j^R(\theta^*_j)+\widetilde{L}_j^I(\theta^*_j)\leq\widetilde{L}_j^R(\theta)+\widetilde{L}_j^I(\theta) \\
        &\leq \left(2(1+L\delta t)^2 + 2 L^2(\delta t)^2\right)
        \mathbb E\Big[|\widehat{\mathcal{V}}_{t_j}-\mathcal{U}_{t_j}(X_{t_j};\xi)|^2 \Big]
        +
        \frac{\nu(\delta t)}{2}\mathbb{E} |\widehat{\mathcal{W}}_{t_j}-\mathcal{Z}_{t_j}(X_{t_j};\eta)|^2.
    \end{align*}
    Minimizing w.r.t. $\theta$ and integrating w.r.t. $x$ yield the desired estimate when $\delta t$ is small enough.
\end{proof}

\subsection{Estimate of the Roman-numeral error terms in the proof of Lemma \ref{Convergence_prelim_result}}
\label{appendix:1}
\begin{proof}[Complete the proof of Lemma \ref{Convergence_prelim_result}]
We complete the proof of Lemma \ref{Convergence_prelim_result}, by appropriately estimating the error terms associated with Roman numerals in \eqref{eq-roman-est}. We focus on the real parts, as the imaginary components can be treated similarly. First, there holds
\begin{align*}
    \frac{1}{\nu\delta t}\sum_{j=0}^{M-1}\mathbb{E}\int_{\mathbb{R}}|\RN{2}_{B,j}^R|^2
    &
    =
    \sum_{j=0}^{M-1} \frac{1}{\delta t} \int_{\bR} \bE \left| (\overline Z_{t_j}^R)\delta W_{t_j} - (Z_{t_j}^R)\delta W_{t_j} \right|^2 
    \\
    &
    = \sum_{j=0}^{M-1} \frac{1}{(\delta t)^3} \int_{\bR}\! \bE \left[\left( \bE_j \int_{t_j}^{t_{j+1}}\! (Z_s^R-Z_{t_j}^R) ds \right)^2 \delta W_{t_j}^2 \right].
\end{align*}
Recalling $h^R = \nabla u^R(t,x)$ and the relation \eqref{reltn-intro-Sch}, we have by It\^o's formula
{\small
\begin{align*}
    &\sum_{j=0}^{M-1} \frac{1}{(\delta t)^3} \int_{\bR}\! \bE \left[ \left(  \bE_{j}\int_{t_{j}}^{t_{j+1}}\!\! (Z_s^R-Z_{t_{j}}^R) ds \right)^2 \delta W_{t_j}^2 \right] 
    \\
    =& \sum_{j=0}^{M-1} \frac{1}{\delta t^3} \int_{\bR}\! \bE \Bigg[ \Bigg( \bE_j \int_{t_j}^{t_{j+1}}\!\!\Big(\int_{t_{j}}^s\partial_t h^R(k,\mathcal{X}_k)dk
    +\sqrt{\nu}\int_{t_{j}}^s\!\!
    \partial_x h^R(k,\mathcal{X}_k)dW_k
    +\frac{\nu}{2}\int_{t_{j}}^s\!\!\partial_{xx} h^R(k,\mathcal{X}_k)dk\Big)ds \Bigg)^2(\delta W_{t_j})^2\Bigg]
    \\
    =&\sum_{j=0}^{M-1} \frac{1}{\delta t^3} \int_{\bR}\! \bE \Bigg[ \Bigg( \bE_j \int_{t_j}^{t_{j+1}}\!\Big(\int_{t_{j}}^s\partial_t h^R(k,\mathcal{X}_k)dk+\frac{\nu}{2}\int_{t_{j}}^s\partial_{xx} h^R(k,\mathcal{X}_k)dk\Big)ds \Bigg)^2(\delta W_{t_j})^2\Bigg]
    \\
    \leq&\sum_{j=0}^{M-1} \frac{1}{\delta t} \int_{\bR}\! \bE \Bigg[ \Bigg( \bE_j \!\Big(\int_{t_{j}}^{t_{j+1}}|\partial_t h^R(k,\mathcal{X}_k)|dk+\frac{\nu}{2}\int_{t_{j}}^{t_{j+1}}|\partial_{xx} h^R(k,\mathcal{X}_k)|dk\Big) \Bigg)^2(\delta W_{t_j})^2\Bigg]
    \\
    \leq&\sum_{j=0}^{M-1}  \bE \int_{\bR}\! \Bigg[ \Bigg( \bE_j \!\Big(\int_{t_{j}}^{t_{j+1}}2\, |\partial_t h^R(k,\mathcal{X}_k)|^2 dk
    +\frac{\nu^2}{2}\int_{t_{j}}^{t_{j+1}}|\partial_{xx} h^R(k,\mathcal{X}_k)|^2dk\Big) \Bigg)(\delta W_{t_j})^2\Bigg]
    \\
    =&(\delta t)\sum_{j=0}^{M-1}\Bigg(  \!\int_{\mathbb{R}}\int_{t_{j}}^{t_{j+1}}2 |\partial_t h^R(k,x)|^2 dk+\int_{\mathbb{R}}\frac{\nu^2}{2}\int_{t_{j}}^{t_{j+1}}|\partial_{xx} h^R(k,x)|^2dk \Bigg)
    \\
=&(\delta t)\Bigg(  \!\int_{\mathbb{R}}\int_0^T2 |\partial_t h^R(k,x)|^2 dk+\int_{\mathbb{R}}\frac{\nu^2}{2}\int_0^T|\partial_{xx} h^R(k,x)|^2dk \Bigg).
\end{align*}
}
\\[4pt]
Next,   the continuity of function $f$ in condition (iii) implies that 
{\small
\begin{align*}
    \frac{1}{\delta t}\sum_{j=0}^{M-1}\mathbb{E}\int_{\mathbb{R}}(\RN{3}_j^R)^2 
    &
    = \frac{1}{\delta t}\sum_{j=0}^{M-1}\mathbb{E}\int_{\mathbb{R}}\Bigg(\int_{t_j}^{t_{j+1}}f^I(s,\mathcal X_s,Y_s^{R},Y_s^I)ds - f^I(t_j,\mathcal X_{t_j},Y_{t_j}^R,Y_{t_j}^I)\delta t\Bigg)^2 \\
    &\leq \sum_{j=0}^{M-1}\mathbb{E}\int_{\mathbb{R}}\int_{t_j}^{t_{j+1}}|f^I(s,\mathcal X_s,Y_s^{R},Y_s^I) - f^I(t_j,\mathcal X_{t_j},Y_{t_j}^R,Y_{t_j}^I)|^2ds\\
    &\leq  C\sum_{j=0}^{M-1}\mathbb{E}\int_{\mathbb{R}}\int_{t_j}^{t_{j+1}}\left[|Y_s^{R} - Y_{t_j}^R|^2+|Y_s^{I}-Y_{t_j}^I|^2+ (\rho(s-t_{j})+|\mathcal X_s-\mathcal X_{t_j}|^2)|Y_{t_j}|^2\right]ds
    \\
    &\leq  C\left[\rho(\delta t) +\delta t\right] + C\sum_{j=0}^{M-1}\mathbb{E}\int_{\mathbb{R}}\int_{t_j}^{t_{j+1}}\left[|Y_s^{R} - Y_{t_j}^R|^2+|Y_s^{I}-Y_{t_j}^I|^2\right]ds.
\end{align*}} 
We will only look at the real part as the imaginary part is concluded similarly.
\begin{align*}
    &C\sum_{j=0}^{M-1}\mathbb{E}\int_{\mathbb{R}}\int_{t_j}^{t_{j+1}}|Y_s^{R} - Y_{t_j}^R|^2ds
    \\
    &\leq C\sum_{j=0}^{M-1}\mathbb{E}\int_{\mathbb{R}}\int_{t_j}^{t_{j+1}}\left[\Bigg|\int_{t_j}^s \partial_t u(k,\mathcal{X}_k)dk\Bigg|^2 + \Bigg|\int_{t_j}^s \partial_x u(k,\mathcal{X}_k)dW_k\Bigg|^2+\Bigg|\int_{t_j}^s\partial_{xx} u(k,\mathcal{X}_k)dk\Bigg|^2\right]\,ds.
\end{align*}
We concentrate only on the middle term as this term gives us the worst decay, and the other terms are handled similarly. It\^o isometry and straightforward calculations yield that
\begin{align*}
    &C\sum_{j=0}^{M-1}\mathbb{E}\int_{\mathbb{R}}\int_{t_j}^{t_{j+1}}\Bigg|\int_{t_j}^s \partial_x u(k,\mathcal{X}_k)dW_k\Bigg|^2
    \leq (\delta t)C\int_{\mathbb{R}}\int_0^T |\partial_x u(k,\mathcal{X}_k)|^2 dk.
\end{align*}
Thus,
$\frac{1}{\delta t}\sum_{j=0}^{M-1}\mathbb{E}\int_{\mathbb{R}}(\RN{3}_j^R)^2 \leq C \left\{ \delta t + \rho(\delta t)\right\}.$    
\\[4pt]
Now we examine
\begin{align*}
    \frac{1}{\delta t}\sum_{j=0}^{M-1}\mathbb{E}\int_{\mathbb{R}}(\RN{1}_{B,j}^R)^2 =\frac{1}{\delta t}\sum_{j=0}^{M-1} \mathbb{E}\int_{\mathbb{R}}\Bigg(\sqrt{\nu}\int_{t_j}^{t_{j+1}}(Z_s^R+Z_s^I)*dW_s - \frac{\sqrt{\nu}}{2}(Z_{t_{j+1}}^R+Z_{t_{j+1}}^I - Z_{t_j}^R - Z_{t_j}^I)\delta W_{t_j}\\
    -\frac{\nu}{2}\Big[\partial_x h^R(t_j,\mathcal{X}_{t_j})\delta t-\partial_x h^R(t_j,\mathcal{X}_{t_j})(\delta W_{t_j})^2+\partial_x h^I(t_j,\mathcal{X}_{t_j})\delta t-\partial_x h^I(t_j,\mathcal{X}_{t_j})(\delta W_{t_j})^2\Big]\Bigg)^2.
\end{align*}
As the part involving $Z^I$ is similar, we only focus on 
{\small
\begin{align*}
    &\frac{1}{\delta t}\sum_{j=0}^{M-1} \mathbb{E}\int_{\mathbb{R}}\Bigg(\sqrt{\nu}\int_{t_j}^{t_{j+1}}Z_s^R*dW_s - \frac{\sqrt{\nu}}{2}(Z_{t_{j+1}}^R - Z_{t_j}^R)\delta W_{t_j}
    -\frac{\nu}{2}\Big[\partial_x h^R(t_j,\mathcal{X}_{t_j})\delta t-\partial_x h^R(t_j,\mathcal{X}_{t_j})(\delta W_{t_j})^2\Big]\Bigg)^2\\
    \leq&\frac{C}{\delta t}\sum_{j=0}^{M-1} \mathbb{E}\int_{\mathbb{R}}\Bigg[\frac{\nu}{2}\int_{t_j}^{t_{j+1}}\partial_xh^R(s,\mathcal{X}_s)ds - \frac{\nu}{2}\partial_x h^R(t_j,\mathcal{X}_{t_j})\delta t\Bigg]^2\\
    &+\frac{C}{\delta t}\sum_{j=0}^{M-1} \mathbb{E}\int_{\mathbb{R}}\Bigg[\frac{\sqrt{\nu}}{2}(Z_{t_{j+1}}^R - Z_{t_j}^R)\delta W_{t_j} - \frac{\nu}{2}\partial_x h^R(t_j,\mathcal{X}_{t_j})(\delta W_{t_j})^2\Bigg]^2\\
    =:&J_1+J_2.
\end{align*}
}
For $J_1$, applying It\^o's formula yields that
\begin{align*}
    J_1
    =&\frac{C}{\delta t}\sum_{j=0}^{M-1} \mathbb{E}\int_{\mathbb{R}}\Bigg[\frac{\nu}{2}\int_{t_j}^{t_{j+1}}\partial_xh^R(s,\mathcal{X}_s)ds-\frac{\nu}{2}\partial_x h^R(t_j,x+\sqrt{\nu}W_{t_j})\delta t\Bigg]^2\\
    \leq&C\sum_{j=0}^{M-1} \mathbb{E}\int_{\mathbb{R}}\int_{t_j}^{t_{j+1}}\Big|\partial_xh^R(s,\mathcal{X}_s)ds-\partial_x h^R(t_j,x+\sqrt{\nu}W_{t_j})\Big|^2ds\\
    =&C\sum_{j=0}^{M-1} \mathbb{E}\int_{\mathbb{R}}\int_{t_j}^{t_{j+1}}\Big|\int_{t_j}^s\partial_t\partial_x h^R(k,\mathcal{X}_k)dk\Big|^2+\Big|\int_{t_j}^s\partial_{xx} h^R(k,\mathcal{X}_k)dW_k\Big|^2+\Big|\int_{t_j}^s\partial_{xxx} h^R(k,\mathcal{X}_k)dk\Big|^2ds.
\end{align*}
The middle term has the worst decay rate, but it is still enough for our purpose, as 
\begin{align*}
    C\sum_{j=0}^{M-1} \mathbb{E}\int_{\mathbb{R}}\int_{t_j}^{t_{j+1}}\Big|\int_{t_j}^s\partial_{xx} h^R(k,\mathcal{X}_k)dW_k\Big|^2 ds
    &\leq  C\delta t \sum_{j=0}^{M-1} \mathbb{E}\int_{\mathbb{R}} \int_{t_j}^{t_{j+1}}|\partial_{xx} h^R(k,\mathcal{X}_k)|^2dk\\
    &= C\delta t \mathbb{E}\int_{\mathbb{R}}\int_{0}^{T}|\partial_{xx} h^R(k,x)|^2dk.
\end{align*}
For $J_2$, we have
\begin{align*}
    J_2 =& \frac{C}{\delta t}\sum_{j=0}^{M-1} \mathbb{E}\int_{\mathbb{R}}\Bigg[\frac{\sqrt{\nu}}{2}(h^R(t_{j+1},\mathcal{X}_{t_{j+1}}) - h^R(t_{j},\mathcal{X}_{t_j}))\delta W_{t_j} - \frac{\nu}{2}\partial_x h^R(t_j,\mathcal{X}_{t_j})(\delta W_{t_j})^2\Bigg]^2\\
    \leq& \frac{C}{\delta t}\sum_{j=0}^{M-1} \mathbb{E}\int_{\mathbb{R}}\Bigg[\frac{\sqrt{\nu}}{2}(h^R(t_{j+1},\mathcal{X}_{t_{j+1}}) - h^R(t_{j+1},\mathcal{X}_{t_j}))\delta W_{t_j} - \frac{\nu}{2}\partial_x h^R(t_{j+1},\mathcal{X}_{t_j})(\delta W_{t_j})^2\Bigg]^2\\
    &+\frac{C}{\delta t}\sum_{j=0}^{M-1} \mathbb{E}\int_{\mathbb{R}}\Bigg[\frac{\sqrt{\nu}}{2}(h^R(t_{j+1},\mathcal{X}_{t_{j}}) - h^R(t_{j},\mathcal{X}_{t_j}))\delta W_{t_j}\Bigg]^2\\
    &+\frac{C}{\delta t}\sum_{j=0}^{M-1} \mathbb{E}\int_{\mathbb{R}}\Bigg[\frac{\nu}{2}\partial_x h^R(t_{j+1},\mathcal{X}_{t_j})(\delta W_{t_j})^2 - \frac{\nu}{2}\partial_x h^R(t_{j},\mathcal{X}_{t_j})(\delta W_{t_j})^2\Bigg]^2\\
    =:&J_2^1+J_2^2+J_2^3.
\end{align*}
For $J_2^2$, it holds that
\begin{align*}
    J_2^2 &= \frac{C}{\delta t}\sum_{j=0}^{M-1} \mathbb{E}\int_{\mathbb{R}}\Bigg[\Big(h^R(t_{j+1},x+\sqrt{\nu}W_{t_{j}})-h^R(t_{j},x+\sqrt{\nu}W_{t_{j}})\Big)\delta W_{t_j}\Bigg]^2\\
    &
    = \frac{C}{\delta t}\sum_{j=0}^{M-1} \mathbb{E}\int_{\mathbb{R}}\Bigg[\Big(\int_{t_j}^{t_{j+1}}\partial_t h^R(k,x+\sqrt{\nu}W_{t_j})dk\Big)^2(\delta W_{t_j})^2\Bigg]\\
    & 
    \leq C\sum_{j=0}^{M-1} \mathbb{E}\int_{\mathbb{R}}\Bigg[\Big(\int_{t_j}^{t_{j+1}}|\partial_t h^R(k,x+\sqrt{\nu}W_{t_j})|^2dk\Big)(\delta t)\Bigg]\\
    &
    =C\delta t\int_{\mathbb{R}}\int_{0}^{T}|\partial_t h^R(k,x)|^2dk.
\end{align*}
For $J_2^3$, applying Fubini theorem and H\"older's inequality gives
\begin{align*}
    J_2^3 = &\frac{C}{\delta t}\sum_{j=0}^{M-1} \mathbb{E}\int_{\mathbb{R}}\Bigg[\frac{\nu}{2}\partial_x h^R(t_{j+1},\mathcal{X}_{t_j})(\delta W_{t_j})^2 - \frac{\nu}{2}\partial_x h^R(t_{j},\mathcal{X}_{t_j})(\delta W_{t_j})^2\Bigg]^2\\
    =&\frac{C}{\delta t}\sum_{j=0}^{M-1} \mathbb{E}\int_{\mathbb{R}}\Bigg[\int_{t_j}^{t_{j+1}}\partial_t\partial_x h^R(k,x+\sqrt{\nu} W_{t_j})dk\Bigg]^2(\delta W_{t_j})^4\\
    \leq&C\sum_{j=0}^{M-1} \mathbb{E}\int_{\mathbb{R}}\int_{t_j}^{t_{j+1}}|\partial_t\partial_x h^R(k,x+\sqrt{\nu} W_{t_j})|^2dk(\delta W_{t_j})^4\\
    \leq&C|\delta t|^2\int_{\mathbb{R}}\int_0^T|\partial_t\partial_x h^R(k,x)|^2dk.
\end{align*}
For $J_2^1$, simply integrating by parts gives
{\small
\begin{align*}
    J_2^1 
    &
    = \frac{C}{\delta t}\sum_{j=0}^{M-1} \mathbb{E}\int_{\mathbb{R}}\Bigg[\frac{\sqrt{\nu}}{2}(h^R(t_{j+1},\mathcal{X}_{t_{j+1}}) - h^R(t_{j+1},\mathcal{X}_{t_j}))\delta W_{t_j} - \frac{\nu}{2}\partial_x h^R(t_{j+1},\mathcal{X}_{t_j})(\delta W_{t_j})^2\Bigg]^2\\
    &
    = \frac{C}{\delta t}\sum_{j=0}^{M-1}\mathbb{E}\int_{\mathbb{R}}\Bigg[\frac{\sqrt{\nu}}{2}\Bigg(\partial_x h^R(t_{j+1},x+\sqrt{\nu}W_{t_j})\sqrt{\nu}(\delta W_{t_j})^2\\
     &\quad
     +\int_0^1\partial_{xx}h^R\big(t_{j+1},x+\sqrt{\nu}W_{t_j}+\theta(\sqrt{\nu}W_{t_{j+1}}-\sqrt{\nu}W_{t_j})\big)(1-\theta)d\theta\nu(\delta W_{t_j})^3\Bigg)- \frac{\nu}{2}\partial_x h^R(t_{j+1},\mathcal{X}_{t_j})(\delta W_{t_j})^2\Bigg]^2\\
     &
     \leq  \frac{C}{\delta t}\sum_{j=0}^{M-1} \mathbb{E}\int_{\mathbb{R}}\int_0^1|\partial_{xx}h^R\big(t_{j+1},x+\sqrt{\nu}W_{t_j}+\theta(\sqrt{\nu}W_{t_{j+1}}-\sqrt{\nu}W_{t_j})\big)|^2d\theta(\delta W_{t_j})^6\\
    &
    \leq  C|\delta t|^2 \int_{\mathbb{R}}\sum_{j=0}^{M-1}|\partial_{xx}h^R(t_{j+1},x)|^2.
\end{align*}}
Summing up the above estimates yields the desired result.
\end{proof}

\bibliographystyle{siam}
\bibliography{ref_qjn}
\end{document}